\theoremstyle{plain}
\newtheorem{theo}{Theorem}[section]
\crefname{theo}{Theorem}{Theorems}
\Crefname{theo}{Theorem}{Theorems}
\newtheorem{prop}[theo]{Proposition}
\crefname{prop}{Proposition}{Propositions}
\Crefname{prop}{Proposition}{Propositions}
\newtheorem{lem}[theo]{Lemma}
\crefname{lem}{Lemma}{Lemmas}
\Crefname{lem}{Lemma}{Lemmas}
\newtheorem{cor}[theo]{Corollary}
\crefname{cor}{Corollary}{Corollaries}
\Crefname{cor}{Corollary}{Corollaries}
\crefname{claim}{Claim}{Claims}
\Crefname{claim}{Claim}{Claims}
\crefname{property}{Property}{Properties}
\Crefname{property}{Property}{Properties}
\newtheorem{problem}[theo]{Problem}
\crefname{problem}{Problem}{Problems}
\Crefname{problem}{Problem}{Problems}
\theoremstyle{definition}
\crefname{defi}{Definition}{Definitions}
\Crefname{defi}{Definition}{Definitions}
\crefname{notation}{Notation}{Notations}
\Crefname{notation}{Notation}{Notations}
\crefname{convention}{Convention}{Conventions}
\Crefname{convention}{Convention}{Conventions}
\crefname{cond}{Condition}{Conditions}
\Crefname{cond}{Condition}{Conditions}
\crefname{assum}{Assumption}{Assumptions}
\Crefname{assum}{Assumption}{Assumptions}
\theoremstyle{remark}
\newtheorem{rem}[theo]{Remark}
\crefname{rem}{Remark}{Remarks}
\Crefname{rem}{Remark}{Remarks}
\newtheorem{ex}[theo]{Example}
\crefname{ex}{Example}{Examples}
\Crefname{ex}{Example}{Examples}
\crefname{section}{Section}{Sections}
\Crefname{section}{Section}{Sections}
\crefname{subsection}{Subsection}{Subsections}
\Crefname{subsection}{Subsection}{Subsections}
\crefname{figure}{Figure}{Figures}
\Crefname{figure}{Figure}{Figures}
\newtheorem*{acknowledgement}{Acknowledgement}
\newcommand{\Z}{\mathbb{Z}}
\newcommand{\R}{\mathbb{R}}
\newcommand{\CP}{\mathbb{CP}}
\newcommand{\spc}{\mathrm{spin}^c}
\newcommand{\spcGL}{\mathrm{spin}^c_{GL}}
\newcommand{\tGL}{\widetilde{GL}}
\newcommand{\FrGL}{\mathrm{Fr}_{GL}}
\newcommand{\sign}{\mathrm{sign}}
\newcommand{\gauge}{{\mathcal G}}
\newcommand{\calM}{{\mathcal M}}
\newcommand{\calW}{{\mathcal W}}
\newcommand{\fraks}{\mathfrak{s}}
\newcommand{\frakt}{\mathfrak{t}}
\newcommand{\Met}{\mathrm{Met}}
\newcommand{\circPi}{\mathring{\Pi}}
\newcommand{\PSC}{\mathrm{PSC}}
\newcommand{\Diff}{\mathrm{Diff}}
\newcommand{\Aut}{\mathrm{Aut}}
\newcommand{\Map}{\mathrm{Map}}
\newcommand{\Si}{\Sigma}
\newcommand{\vp}{\varphi}
\newcommand{\inc}{\hookrightarrow}
\newcommand{\del}{\partial}
\newcommand{\im}{\mathop{\mathrm{Im}}\nolimits}
\newcommand{\SWinv}{\mathop{\mathrm{SW}}\nolimits}
\newcommand{\SW}{Seiberg--Witten }
\newcommand{\id}{\mathrm{id}}
\title[Positive scalar curvature and families of Seiberg-Witten equations]{Positive scalar curvature and higher-dimensional families of Seiberg-Witten equations}
\author{Hokuto Konno}
\address{RIKEN iTHEMS, Wako, Saitama 351-0198, Japan}
\email{hokuto.konno@riken.jp}
\date{}
\begin{document}

\maketitle

\begin{abstract}
We introduce an invariant of tuples of commuting diffeomorphisms on a $4$-manifold using families of \SW equations.
This is a generalization of Ruberman's invariant of diffeomorphisms defined using $1$-parameter families of \SW equations.
Our invariant yields an application to the homotopy groups of the space of positive scalar curvature metrics on a $4$-manifold.
We also study the extension problem for families of $4$-manifolds using our invariant.
\end{abstract}

\tableofcontents

\section{Introduction}

Ruberman~\cite{MR1671187, MR1734421, MR1874146} has introduced gauge theoretic invariants of a diffeomorphism on a $4$-manifold.
Using one of his invariants in \cite{MR1874146}, he has shown that there exist $4$-manifolds for which the spaces of metrics with positive scalar curvature (PSC for short) are disconnected.
This is the first result on the homotopy groups of the space of PSC metrics on a $4$-manifold.
The main ingredients of these invariants are $1$-parameter families of Yang--Mills ASD or \SW equations.
It is natural to ask whether we can consider a kind of generalization of such an invariant using gauge theory for higher-dimensional families.
In particular, an interesting question is how we may apply such an extended invariant to the topological study of the space of PSC metrics via higher-dimensional families.

In this paper we generalize Ruberman's invariant given in~\cite{MR1671187} using higher-dimensional families of \SW equations.
We shall define an invariant of tuples of commuting diffeomorphisms preserving a given $\spc$ structure on a $4$-manifold.
An important point is that we can give such diffeomorphisms for which our invariant does not vanish and this non-vanishing result yields a new application to the space of PSC metrics on a $4$-manifold.
Let us describe our main application here.
We consider $4$-manifolds obtained as the connected sum of some copies of $\CP^{2}$ and $-\CP^{2}$, which are typical $4$-manifolds admitting PSC metrics.
Ruberman~\cite{MR1874146} has proved that $\pi_{0}(\PSC(X)) \neq 0$ for $X = 2k\CP^{2} \# l(-\CP^{2})$ with $k \geq 2$ and a sufficiently large $l$.
For other numbers of the connected summands, the result due to Xu~\cite{MR2706507}, based on a cohomotopy refinement of Ruberman's invariant, gives the non-triviality of $\pi_{0}(\PSC(X))$ for $X = (4k+7)\CP^{2} \# l(-\CP^{2})$ with $k \geq 0$ and a sufficiently large $l$.
We shall show the following \lcnamecref{application to PSC} in this paper.

\begin{theo}
\label{application to PSC}
Let $k \geq 2$, $n \geq 1$, $l \geq 10k + 2n -1$ be natural numbers and $X$ be the $4$-manifold given by
\[
X = (2k + n - 1)\CP^{2} \# l(-\CP^{2}).
\]
Then, 
\[
\pi_{i}(\PSC(X)) \neq 0
\]
holds for at least one $i \in \{0, \ldots, n-1\}$.
\end{theo}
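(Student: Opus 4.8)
The plan is to derive the theorem from the family Seiberg--Witten invariant $\SWinv(X,\fraks;f_1,\ldots,f_n)$ constructed in this paper --- an invariant of an $n$-tuple of commuting diffeomorphisms $f_1,\ldots,f_n$ of $X$ preserving a $\spc$ structure $\fraks$ of formal dimension $d(\fraks)=-n$ --- by combining two ingredients. First, a \emph{vanishing principle}: if $\PSC(X)$ is $(n-1)$-connected, then $\SWinv(X,\fraks;f_1,\ldots,f_n)=0$. Second, a \emph{non-vanishing example} on $X=(2k+n-1)\CP^2\#l(-\CP^2)$. Granting these, the theorem follows at once: $X$ admits a PSC metric (connected sums of $4$-manifolds carrying PSC metrics carry PSC metrics, and $\pm\CP^2$ do), so $\PSC(X)\neq\emptyset$; and if $\pi_i(\PSC(X))=0$ held for every $i\in\{0,\ldots,n-1\}$, then $\PSC(X)$ would be $(n-1)$-connected, forcing the invariant to vanish and contradicting the example. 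So it remains to establish the two ingredients.

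\emph{The vanishing principle.} The invariant is a signed count of the $0$-dimensional parametrized moduli space over the $n$-fold mapping torus of $(f_1,\ldots,f_n)$ --- a bundle over $T^n=(S^1)^n$ with fibre $X$ associated to the $\Z^n$-action generated by the $f_i$ --- equipped with a family of metrics and perturbations covering a section of the Borel bundle $\R^n\times_{\Z^n}\Met(X)\to T^n$. Since each $f_i^{*}$ preserves $\PSC(X)$ (pullbacks of PSC metrics are PSC) and $\Met(X)$ is contractible, the successive obstructions to homotoping this section into $\R^n\times_{\Z^n}\PSC(X)$ lie in $H^{i+1}(T^n;\pi_i(\PSC(X)))$ for $i=0,\ldots,n-1$, and these vanish once $\PSC(X)$ is $(n-1)$-connected; thus the family of metrics may be taken entirely inside $\PSC(X)$. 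For a PSC metric with a small perturbation the Weitzenb\"ock formula forces the spinor to vanish, so every solution is reducible; and since $n=\dim T^n<b^+(X)=2k+n-1$, a generic perturbation of the family removes all reducibles. Hence the parametrized moduli space is empty and $\SWinv(X,\fraks;f_1,\ldots,f_n)=0$. (That the invariant is computed correctly by this particular family is part of its well-definedness, proved earlier under hypotheses satisfied here --- notably the vanishing of the ordinary invariant $\SWinv(X,\fraks)$, which holds because $X$ admits a PSC metric.)

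\emph{The non-vanishing example.} The construction extends Ruberman's~\cite{MR1874146}, who treats $n=1$. Let $E(k)$ be the simply-connected minimal elliptic surface with $b^+=2k-1$ and no multiple fibres; because $k\geq 2$, $E(k)$ carries a $\spc$ structure of formal dimension $0$ with nonzero Seiberg--Witten invariant. As $l\geq 10k+2n-1$, one may blow $E(k)$ up enough times to realize $X$ simultaneously as $\widetilde{E(k)}\#n\CP^2$, with $\widetilde{E(k)}$ a blow-up of $E(k)$, and --- invoking the Mandelbaum--Moishezon complete-decomposability theorem, for which this many exceptional summands suffice --- as the standard form $(2k+n-1)\CP^2\#l(-\CP^2)$. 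Choose $\fraks$ so that in the elliptic picture $\fraks=\fraks_{\widetilde E}\#\fraks_1\#\cdots\#\fraks_n$, where $\fraks_{\widetilde E}$ is a basic $\spc$ structure on $\widetilde{E(k)}$ (so $d(\fraks_{\widetilde E})=0$ and $\SWinv(\widetilde{E(k)},\fraks_{\widetilde E})\neq 0$) and $c_1(\fraks_i)=\pm H$ on the $i$-th $\CP^2$ (so $d(\CP^2,\fraks_i)=-2$); then $d(X,\fraks)=d(\fraks_{\widetilde E})+\sum_{i=1}^{n}d(\CP^2,\fraks_i)+n=-n$. For each $i$, a suitable combination of the Mandelbaum diffeomorphism that dissolves the $i$-th $\CP^2$-summand into $\widetilde{E(k)}$ with a conjugation on the elliptic surface, corrected by a homologically standard diffeomorphism so as to preserve $\fraks$, defines a self-diffeomorphism $f_i$ of $X$; the two extra $-\CP^2$-summands budgeted to each direction give the room to arrange that $f_1,\ldots,f_n$ pairwise commute. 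For $n=1$ this reproduces Ruberman's diffeomorphism, with $\SWinv(X,\fraks;f_1)=\pm\SWinv(\widetilde{E(k)},\fraks_{\widetilde E})\neq 0$. For general $n$, a gluing formula for the family invariant under connected sum --- the neck-stretching analysis identifying the parametrized moduli space over $T^n$ with a fibred product assembled from the moduli space of $\widetilde{E(k)}$ and the reducibles on the $\CP^2$-summands at their wall-crossings --- yields
\[
\SWinv(X,\fraks;f_1,\ldots,f_n)=\pm\,\SWinv(\widetilde{E(k)},\fraks_{\widetilde E})\neq 0 .
\]

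\emph{The main obstacle.} The delicate step is this last one: proving the connected-sum gluing formula for the $n$-parameter family invariant --- the compactness and neck-stretching analysis in the presence of reducibles on the $b^+=1$ summands, together with the identification of their contributions --- and, hand in hand, showing that the $f_i$ can be chosen to commute while remaining ``independent'' enough that each circle factor of $T^n$ contributes, i.e.\ that the $n$-parameter refinement of Ruberman's invariant does not collapse to $0$. By contrast the vanishing principle is essentially formal once the basic properties of the invariant are in place, reducing to obstruction theory over $T^n$ together with the Weitzenb\"ock estimate.
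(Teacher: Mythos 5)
Your overall architecture -- a vanishing principle for $\SWinv(f_1,\ldots,f_n;\fraks)$ under $(n-1)$-connectivity of $\PSC(X)$, combined with a non-vanishing example on $X$, plus the Gromov--Lawson observation that $\PSC(X)\neq\emptyset$ -- is exactly the paper's (the vanishing principle is Proposition~\ref{vanishing of homotopy groups and invariant}, proved via obstruction theory over $T^n$ as in Remark~\ref{remark on use of obstruction theory}). But the crux, the non-vanishing example, is where your proposal has a genuine gap, and it is precisely the part you flagged as ``the main obstacle.'' Your construction of the $f_i$ is wrong in two ways. First, ``the Mandelbaum diffeomorphism that dissolves the $i$-th $\CP^2$-summand into $\widetilde{E(k)}$'' is not a thing: Mandelbaum--Moishezon is a \emph{decomposability} theorem for complex surfaces, which the paper invokes only to identify $M\#N$ with $(2k+n-1)\CP^2\#l(-\CP^2)$; it does not supply self-diffeomorphisms of $X$. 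Second, putting $c_1(\fraks_i)=\pm H$ on a bare $\CP^2$-summand cannot work: $\CP^2$ carries no orientation-preserving diffeomorphism reversing $H^+$, so there is no ``reflection'' to drive a wall-crossing in that direction, and your later mention of ``two extra $-\CP^2$-summands per direction'' is inconsistent with this choice of $\fraks_i$.

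The paper resolves the obstacle cleanly by making commutativity \emph{automatic}: it takes $N=\#_{i=1}^n N_i$ with $N_i\cong\CP^2\#2(-\CP^2)$, puts $c_1(\frakt_0)=H+E_1+E_2$ on each copy, and lets $f_i$ be a reflection-type diffeomorphism $(\rho_H\#\rho_{E_1}\#\rho_{E_2})\circ\rho_S$ (where $S$ is the $(-1)$-sphere dual to $H+E_1+E_2$) supported entirely on $N_i$ and extended by the identity elsewhere. Since the $f_i$ have disjoint supports they commute trivially, and each reverses the homology orientation -- there is nothing to arrange. The second ingredient you omit entirely is Proposition~\ref{prop: description of higher-dim. wall-crossing}, the author's description of \emph{higher-dimensional} wall-crossing from \cite{Konno1} in terms of embedded surfaces $\Sigma_i^{\pm}$ with long cylindrical neighborhoods $[0,R]\times S^1\times\Sigma_i^{\pm}$: this is what lets one produce an $n$-cube family $\phi_N:[0,1]^n\to\Pi(N)$ with $\phi_N\cdot\calW(N)=\pm1$, i.e.\ verify that the $n$-parameter family actually has degree one around the wall, rather than degenerating. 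Your ``neck-stretching analysis identifying the parametrized moduli space with a fibred product'' is a placeholder for exactly this argument but does not supply it. Without both a correct construction of commuting $f_i$ and the higher-dimensional wall-crossing computation, the non-vanishing half of the proof is not established.
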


Setting $n=1$ in \cref{application to PSC} recovers Ruberman's result in \cite{MR1874146} on the disconnectivity of the space of PSC metrics.
To exhibit a new constraint on PSC metrics, let us consider the case that $k = n = 2$ in \cref{application to PSC} for example.
Then we deduce that, for each $l \geq 23$,
\[
\pi_{i}(\PSC((5\CP^{2})\#l(-\CP^{2}))) \neq 0
\]
holds for $i =0$ or $i=1$.
This does not follow from Ruberman's result and Xu's.
 In fact, as explained in \cref{rem: comparison with Ruberman and Xu}, \cref{application to PSC} provides new constraints on $\PSC(X)$ for infinitely many $4$-manifolds $X$'s having distinct $b^{+}(X)$, where $b^{+}(X)$ is the dimension of a maximal positive-definite subspace of $H^2(X;\R)$ with respect to the intersection form of $X$.
(A detailed comparison between \cref{application to PSC} and Ruberman's result and Xu's is given in \cref{rem: comparison with Ruberman and Xu}.)

The following two tools are used to prove \cref{application to PSC}:
the first one is the combination of wall-crossing and gluing technique due to Ruberman~\cite{MR1671187, MR1734421, MR1874146}, and the second is the description of higher-dimensional wall-crossing phenomena in terms of embedded surfaces given in \cite{Konno1} by the author.
Here let us explain the term ``higher-dimensional wall-crossing'';
it also describes the root of \cref{application to PSC}.
Let us start $4$-manifold $X$ with $b^{+}(X)=0$.
For such a $4$-manifold, the celebrated theorem due to Donaldson~\cite{MR710056} tells that reducible points in the moduli space of solutions to the Yang--Mills ASD equation give a strong constraint on the topology of $X$.
This story is valid also in the \SW theory.
In the case that $b^{+}(X) = 1$, the effect of reducible solutions in the moduli space is described as wall-crossing.
This can be regarded as a $1$-parameter analogue of Donaldson's theorem.
Namely, one can find a reducible solution using a suitable $1$-parameter family of ASD/\SW equations and it gives information about the parameterized moduli space.
``Higher-dimensional wall-crossing'' is an analogue of it for $4$-manifolds with general $b^{+} \geq 1$.
Although the effect of reducible solutions weakens for larger $b^{+}$, one can detect it using a $b^{+}$-dimensional family of equations.
The statement of \cref{application to PSC} reflects how weak the effect is:
Ruberman has used usual (i.e. $b^{+}=1$) wall-crossing, and he has proved that $\pi_{0}(\PSC(X)) \neq 0$ for some $X$.
(Although in Ruberman's case the $4$-manifold $X$ he considered has arbitrary large (even) $b^{+}$, his argument is a combination of wall-crossing for a manifold $N$ whose $b^{+}$ is $1$ and a gluing argument with another $4$-manifold $M$.
We note that most $b^{+}$ of $X$ comes from $M$ except for $1$ coming from $N$.)
On the other hand, we use wall-crossing for $n$-dimensional families for general $n \geq 1$ on a $4$-manifold $N$ with $b^{+}(N)=n$, and we can show that at least one of $\pi_{0}(\PSC(X)), \ldots, \pi_{n-1}(\PSC(X))$ is non-trivial for some $X$ obtained as the connected sum of $N$ with another $M$.

We also use our invariant to study the group of diffeomorphisms on a given $4$-manifold preserving a given $\spc$ structure, in particular to study the extension problem for $4$-manifold bundles having this group as the structure group.
For a given fiber bundle with certain structure group on some base space $M$ and for a space $W$ with $M \subset W$, it is a fundamental question whether one can extend the bundle to a bundle over $W$ having the same structure group.
We can use the non-vanishing theorem for our invariant to give an obstruction to extensions of families of $4$-manifolds whose structure group is the group of diffeomorphisms preserving a $\spc$ structure.
(See \cref{extension problem at the level of Diff}.)
We hope that this kind of obstruction might be useful to approach the study of higher-dimensional manifolds via $4$-dimensional gauge theory.

\begin{acknowledgement}
The author would like to express his deep gratitude to Mikio Furuta for the helpful suggestions and for continuous encouragement during this work.
The author would also like to express his appreciation to Nobuhiro Nakamura for stimulating discussion and informing him of Nakamura's work and Ruberman's on gauge theory for families.
Their work is the starting point of the current paper.
The author also wishes to thank Tomohiro Asano and Masaki Taniguchi for useful discussion in a conference held in summer 2017 at UCLA.
In particular, Asano explained to the author an example in \cref{rem extension and ex for surface bundle}.
Finally, the author gratefully acknowledges the many helpful suggestions of the anonymous referees.
The author was supported by JSPS KAKENHI Grant Number 16J05569 and
the Program for Leading Graduate Schools, MEXT, Japan.
\end{acknowledgement}

\section{Invariant of tuples of diffeomorphisms}

In this section, for a given $\spc$ $4$-manifold, we define an invariant of $n$-tuples ($n \geq 1$) of  commuting diffeomorphisms preserving the $\spc$ structure.
This is a higher-dimensional analogue of the \SW invariant of diffeomorphisms due to Ruberman given in \cite{MR1671187}.
The relation between this invariant and PSC metrics will be given in \cref{subsection: Relation to the space of PSC metrics}.
We also note a generalization of this invariant of tuples of diffeomorphisms in \cref{subsection A generalized invariant} and interpret this generalized invariant as an obstruction to extensions of families of 4-manifolds in \cref{subsection: Invariant as an obstruction}.

\subsection{Definition of the invariant}
\label{subsection Definition of the invariant}

In this subsection
we define an invariant of $n$-tuples ($n \geq 1$) of commuting diffeomorphisms preserving a given $\spc$ structure on a $4$-manifold.
To do this, we will consider an $n$-parameter  family of \SW equations due to Nakamura~\cite{MR2015245, MR2644908}.
(For \cite{MR2015245}, there is a correction~\cite{MR2176601}.)
Strictly speaking, we will use a slight variant of the family: we shall describe the family as a subset of the space of perturbations while it has been given as an abstract fiber bundle on a torus in \cite{MR2015245, MR2644908}.
Our description is a higher-dimensional alanogue of Ruberman's $1$-parameter family ~\cite{MR1671187}.
(For a description of the family as a family on the torus, which is similar to Nakamura~\cite{MR2015245, MR2644908}, see \cref{remark on family on the torus}.)
To describe the notion of ``diffeomorphisms preserving a $\spc$ structure'' without using Riemannian metrics, we introduce a term {\it $spin^{c}_{GL}$ structure} as follows.
Fix a connected double covering $\tGL^{+}_{4}(\R)$ of $GL^{+}_{4}(\R)$, where $GL^{+}_{4}(\R)$ is the group of invertible real $4\times4$-matrices with $\det>0$.
Set 
\[
Spin^{c}_{GL}(4) := (\tGL^{+}_{4}(\R) \times U(1))/\pm1.
\]
We have the natural map $Spin^{c}_{GL}(4) \to GL_{4}^{+}(\R)$ as the map $Spin^{c}(4) \to SO(4)$.
For a given oriented $4$-manifold $X$, we denote by $\FrGL(X) \to X$ the frame bundle whose fiber at $x \in X$ is the set of oriented frames of $T_{x}X$.
We define a {\it $spin^{c}_{GL}$ structure} on $X$ as a $Spin^{c}_{GL}(4)$-bundle $P_{GL} \to X$ such that the $GL_{4}^{+}(\R)$-bundle $P_{GL} \times_{Spin^{c}_{GL}(4)} GL_{4}^{+}(\R) \to X$ coincides with $\FrGL(X)$.
In terms of classifying spaces, a $\spcGL$ structure is a lift $X \to BSpin^{c}_{GL}(4)$ of the classifying map $X \to BGL_{4}^{+}(\R)$ of $\FrGL(X)$ along the natural map $BSpin^{c}_{GL}(4) \to BGL_{4}^{+}(\R)$ induced from $Spin^{c}_{GL}(4) \to GL_{4}^{+}(\R)$:
\begin{align*}
\xymatrix{
     & BSpin^{c}_{GL}(4) \ar[d] \\
    X \ar[r] \ar[ru] &  BGL_{4}^{+}(\R).
    }
\end{align*}
An isomorphism class of $\spcGL$ structures corresponds to a homotopy class of such lifts.
Given a $\spcGL$ structure $\fraks$, a $\spc$ structure $\fraks_{g}$ is induced corresponding to each Riemannian metric $g$ on $X$.
For a fixed metric, an isomorphism class of $\spcGL$ structures corresponds one-to-one with an isomorphism class of $\spc$ structures.
We do not therefore distinguish $\spcGL$ structure from $\spc$ structure when we consider them at the level of isomorphism classes.

We here recall some basic facts related to wall-crossing in \SW theory.
Let $X$ be an oriented closed smooth $4$-manifold equipped with a homology orientation and $\fraks$ be a $\spcGL$ structure on $X$.
Here a homology orientation means an orientation of the vector space $H^1(X;\R) \oplus H^+(X;\R)$, where $H^+(X;\R)$ is a maximal positive-definite subspace of $H^2(X;\R)$ with respect to the intersection form of $X$.
We note that the determinant line bundle $L \to X$ is defined from $\fraks$ without using any metric on $X$.
We can therefore take a smooth reference connection $A_0$ of $L$ to be independent of the choice of metric.
For each Riemannian metric $g$ on $X$, we obtain the induced $\spc$ structure $\fraks_{g}$ and the spinor bundles $S^{\pm} = S^{\pm}_{g} \to X$ as usual.
Let us denote by $\Omega^+ = \Omega^+_g = \Gamma(\Lambda^{+}_{g})$ the space of self-dual $2$-forms on $X$ with respect to $g$.
A $U(1)$-connection $A$ on $L$ gives rise to the Dirac operator $D_A : \Gamma(S^+) \to \Gamma(S^-)$.
For a $U(1)$-connection $A$, a positive spinor $\Phi \in \Gamma(S^+)$, and an imaginary self-dual $2$-form $\mu \in i\Omega^+$, we call the equations
\begin{align*}
\begin{cases}
\rho(F^+_A + \mu) = \sigma(\Phi, \Phi),\\
D_A \Phi = 0
\end{cases}
\end{align*}
the (perturbed) \SW equations with respect to $(g, \mu)$.
Here $\rho : \Lambda^+ \to \mathfrak{su}(S^+)$ is the map obtained from the Clifford multiplication, $F^+_A$ is the self-dual part of the curvature $F_A$ of $A$, and $\sigma(\cdot, \cdot)$ is the quadratic form given by $\sigma(\Phi, \Phi) = \Phi \otimes \Phi^\ast - |\Phi|^2 \id/2$.
Let us denote by $\calM((g, \mu), \fraks)$ the moduli space of solutions to the perturbed \SW equations with respect to $(g, \mu)$ and $\fraks$.
The space of perturbations $\Pi = \Pi(X)$ is given by
\[
\Pi(X) := \Set{ (g, \mu) \in \Met(X) \times \Omega^2 | \mu \in \Omega^+_g } = \bigsqcup_{g \in \Met(X)}\Omega^+_g,
\]
where $\Met(X)$ is the space of Riemannian metrics on $X$.
This is a subbundle of the trivial bundle $\Met(X) \times \Omega^2 \to \Met(X)$.
Let us identify $\Met(X)$ with the zero-section of this subbundle.
For $A$ and $\Phi$, we call the equations
\begin{align*}
\begin{cases}
\rho(F^+_A) = \sigma(\Phi, \Phi),\\
D_A \Phi = 0
\end{cases}
\end{align*}
the unperturbed \SW equations with respect to $g$.
We call the subset of perturbations $\calW = \calW(X) \subset \Pi(X)$ defined by
\[
\calW(X) = \bigsqcup_{g \in \Met(X)}\calW_{g}(X), \quad
\calW_{g}(X) := F_{A_0}^{+_g} + \im{d^{+_g}} \subset \Omega^+_g
\]
the {\it wall}.
The wall $\calW(X)$ is obviously independent of the choice of $A_{0}$.
We define $\circPi = \circPi(X) = \bigsqcup_{g \in \Met(X)} \circPi_g(X)$ by
\[
\circPi_g(X) := \Pi_g(X) \setminus \calW_g(X).
 \]
The wall $\calW$ is of codimension-$b^+$ in $\Pi$ since $\Omega^+_g/\im(d^{+_g}) \simeq H^+(X;\R)$ for each $g$.
Since $\circPi$ is a fiber bundle whose fiber is homotopy equivalent to the sphere $S^{b^+ -1}$ and $\Met(X)$ is contractible, the total space $\circPi$ is also homotopy equivalent to $S^{b^+ -1}$.
Recall that, for $(g, \mu) \in \Pi$, the perturbed \SW equations with respect to $(g, \mu)$ admit a reducible solution if and only if $(g, \mu) \in \calW$.
Strictly speaking, as usual, we shall work on suitable Sobolev spaces.
However, we omit $L^2_{k}(\cdot)$ etc from our notation for simplicity.

Let us denote by $\Diff(X, \fraks)$ the group of diffeomorphisms preserving both the orientation of $X$ and $\fraks$.
Here we say that a diffeomorphism $f$ preserves $\fraks$ if $f$ satisfies $f^\ast \fraks \cong \fraks$ as $\spcGL$ structure.
Recall that, if $X$ is simply connected,  $f$ preserves $\fraks$ if and only if $f^\ast c_1(\fraks) = c_1(\fraks)$.
Set $n := -d(\fraks)$, where $d(\fraks)$ is the formal dimension of the (unparameterized) moduli space of solutions to the \SW equations:
$d(\fraks) = (c_1(\fraks)^2 - 2\chi(X) -3\sign(X))/4$.
Assume that
\[
n > 0 \quad {\rm and}\quad b^+(X) \geq n + 2.
\]
For commuting diffeomorphisms $f_1, \ldots, f_n \in \Diff(X, \fraks)$, 
we shall define
\[
\SWinv(f_1, \ldots, f_n; \fraks) \in \Z \ {\rm or}\ \Z/2.
\]
Here, if all of $f_1, \ldots, f_n$ preserve the given homology orientation, the number $\SWinv(f_1, \ldots, f_n; \fraks)$ is defined in $\Z$, and if at least one of $f_1, \ldots, f_n$ reverses the homology orientation, $\SWinv(f_1, \ldots, f_n; \fraks)$ is defined in $\Z/2$.

To define $\SWinv(f_1, \ldots, f_n; \fraks)$, we will use an $n$-parameter  family of \SW equations.
This is based on the idea of mapping torus due to Nakamura~\cite{MR2015245, MR2644908}.
(See \cref{remark on family on the torus}.)
For $k \in \{0,\ldots ,n\}$ and distinct indices $i_1, \ldots, i_k \in \{1, \ldots, n\}$ with $i_{1} < \cdots < i_{k}$, we shall construct a smooth generic map
\[
\vp_k(f_{i_1}, \ldots, f_{i_k}) : [0,1]^k \cong [0, 1]^{\{i_{1}, \ldots, i_{k}\}} \to \circPi
\]
inductively with respect to $k$.
Here, in the case that $k=0$, $\vp_0 = \vp_0(\emptyset)$ is a map from a point to $\circPi$.
By convention, $[0,1]^{0}$ is a single point denoted by $\{0\}$.
First note that all of $\pi_0(\circPi), \ldots, \pi_n(\circPi)$ are trivial since $b^+(X) \geq n+2$.
Let us take a generic point $p \in \circPi$.
This point can be regarded as a generic map $\vp_0 : \{0\} \to \circPi$.
Note that one can define the pull-back $f^\ast : \circPi \to \circPi$ for any $f \in \Diff(X, \fraks)$.
In the case that $k=1$, corresponding to each $i \in \{1, \ldots, n\}$, we can take a generic path from $p$ to $f_i^\ast p$ in $\circPi$ since $\pi_0(\circPi)$ is trivial.
This is given by a generic map $\vp_1(f_i) : [0,1] \to \circPi$.
In the case that $k \geq 2$, for distinct $i$ and $j$ with $i < j$, the pulled-back path $f_i^\ast \vp_1(f_j) : [0,1] \to \circPi$ gives a path from $f_i^\ast p$ to $f_i^\ast f_j^\ast p$, and $f_j^\ast \vp_1(f_i) : [0,1] \to \circPi$ gives a path from $f_j^\ast p$ to $f_i^\ast f_j^\ast p$
 since $f_i f_j = f_j f_i$.
Because $\pi_1(\circPi)$ is trivial, we can take a generic map $\vp_2(f_i, f_j) : [0,1]^2 \to \circPi$ such that  the map $\vp_2(f_i, f_j)$ coincides with
\begin{itemize}
\item $\vp_1(f_i)$ on $[0,1] \times \{0\}$,
\item $\vp_1(f_j)$ on $\{0\} \times [0,1]$,
\item $f_j^\ast \vp_1(f_i)$ on $[0,1] \times \{1\}$, and
\item $f_i^\ast \vp_1(f_j)$ on $\{1\} \times [0,1]$.
\end{itemize}
\begin{figure}
\begin{center}
\scalebox{1}{
\begin{tikzpicture}
[xscale = 1.2, yscale = 1.2]
\draw [thick] (4,2) -- (6,2);
\draw [thick] (4,0) -- (6,0);
\draw [thick] (4,2) -- (4,0);
\draw [thick] (6,2) -- (6,0);
\fill(4,2) circle (2.2 pt);
\fill(6,2) circle (2.2 pt);
\fill(4,0) circle (2.2 pt);
\fill(6,0) circle (2.2 pt);
\draw(3.8,1-1.4) node {$p$};
\draw(6.3,1-1.4) node {$f_{i}^{\ast}p$};
\draw(3.7,2.4) node {$f_{j}^{\ast}p$};
\draw(7,2.4) node {$f_{i}^{\ast}f_{j}^{\ast}p = f_{j}^{\ast}f_{i}^{\ast}p$};
\draw(5,1-1.4) node {$\vp_{1}(f_{i})$};
\draw(5,2.4) node {$f_{j}^{\ast}\vp_{1}(f_{i})$};
\draw(3.5,1) node {$\vp_{1}(f_{j})$};
\draw(6.7,1) node {$f_{i}^{\ast}\vp_{1}(f_{j})$};
\draw(5,1) node {$\vp_{2}(f_{i},f_{j})$};
\fill [gray, opacity=.3] (4,2) -- (6, 2) -- (6, 0) -- (4,0);
\end{tikzpicture}
}
\end{center}
\caption{$2$-parameter family corresponding to $\vp_{2}(f_{i},f_{j})$}
\label{figure : k2}
\end{figure}
(See \cref{figure : k2}.)
We can easily extend this construction to higher-dimensional families as follows.
For $k \in \{1, \ldots, n-1\}$, assume that generic maps $\vp_{l}(\cdot, \ldots, \cdot)$ $(0 \leq l \leq k)$ are already given.
Since $\pi_{k}(\circPi)$ is trivial, for distinct indices $i_1, \ldots, i_{k+1}$ with $i_{1} < \cdots < i_{k+1}$, we can take a generic map
\[
\vp_{k+1}(f_{i_1}, \ldots, f_{i_{k+1}}) : [0,1]^{k+1} \to \circPi
\]
satisfying the following condition.
Note that a codimension-$1$ face of $[0,1]^{k+1}$ is one of
\begin{align}
F_{0,j}^{k} := \{(\ast, \ldots, \ast, 0, \ast, \ldots, \ast)\}
\label{eq: facet time-0}
\end{align}
and 
\begin{align}
F_{1,j}^{k} := \{(\ast, \ldots, \ast, 1, \ast, \ldots, \ast)\}
\label{eq: facet time-1}
\end{align}
for some $j \in \{1\, \ldots , k+1\}$, where $0$ and $1$ are in the $j$-th coordinates.
We require that the map $\vp_{k+1}(f_{i_1}, \ldots, f_{i_{k+1}})$ coincides with
\begin{itemize}
\item $\vp_{k}(f_{i_1}, \ldots, \hat{f}_{i_j}, \ldots,  f_{i_{k+1}})$ on $F_{0,j}^{k} \cong [0,1]^{k}$, and
\item ${f}_{i_j}^\ast \vp_{k}(f_{i_1}, \ldots, \hat{f}_{i_j}, \ldots,  f_{i_{k+1}})$ on $F_{1,j}^{k} \cong [0,1]^{k}$,
\end{itemize}
where the notation $\hat{f}_{i_j}$ means that the component is removed.

Though the above procedure, we obtain a generic $n$-parameter family
\[
\vp_n(f_{1}, \ldots, f_{n}) : [0,1]^n \to \circPi
\]
and we can consider the family of (perturbed) \SW equations parameterized on $[0,1]^n$ by $\vp_n(f_{1}, \ldots, f_{n})$.
Since $d(\fraks) = -n$, the moduli space of solutions to the \SW equations vanishes on each codimension $\geq 1$ face of this $n$-parameter family.
Let us denote by $\calM(\vp_n(f_{1}, \ldots, f_{n}), \fraks)$ the parameterized moduli space on $[0,1]^n$ by $\vp_n(f_{1}, \ldots, f_{n})$, namely, 
\begin{align}
\calM(\vp_n(f_{1}, \ldots, f_{n}), \fraks) := \bigsqcup_{t \in [0,1]^n} \calM(\vp_n(f_{1}, \ldots, f_{n})(t), \fraks).
\label{fundamental moduli on Euclidean space}
\end{align}
By the compactness of the usual (i.e. unparameterized) moduli space of solutions to the \SW equations and that of the parameter space $[0,1]^n$, the parameterized moduli space $\calM(\vp_n(f_{1}, \ldots, f_{n}), \fraks)$ is also compact.
Since we have fixed a homology orientation, $\calM(\vp_n(f_{1}, \ldots, f_{n}), \fraks)$ is oriented.
We can therefore define the integer 
\[
\SWinv(f_1, \ldots, f_n; \fraks; \vp_{\bullet}) := \# \calM(\vp_n(f_{1}, \ldots, f_{n}), \fraks) \in \Z
\]
by counting with signs the points of the parameterized moduli space, which is a $0$-dimensional compact oriented manifold.

\begin{rem}
\label{remark on family on the torus}
We remark that the definition of $\SWinv(f_1, \ldots, f_n; \fraks; \vp_{\bullet})$ above can be interpreted as a counting for a family on the $n$-torus.
We note that, for each $j \in \{1, \ldots, n\}$, $f_{j}$ induces a diffeomorphism
\begin{align}
\calM(\vp_{n}(f_{1}, \ldots, f_{n}), \fraks)|_{F_{0,j}^{n-1}} \cong \calM(\vp_{n}(f_{1}, \ldots, f_{n}), \fraks)|_{F_{1,j}^{n-1}}.
\label{fundamental diffeo between moduli}
\end{align}
As remarked in Ruberman~\cite{MR1671187}, 
the diffeomorphism \eqref{fundamental diffeo between moduli} between moduli spaces is independent of the choice of lift of $f_j$ to an isomorphism at the level of $\spc$ structures, since the ambiguity of the choice is absorbed into the gauge group.
By identifying
\[
\bigsqcup_{t \in F_{0,j}^{n-1}} \calM(\vp_n(f_{1}, \ldots, f_{n})(t), \fraks)\ {\rm and}\ \bigsqcup_{t \in F_{1,j}^{n-1}} \calM(\vp_n(f_{1}, \ldots, f_{n})(t), \fraks)
\]
via the diffeomorphism \eqref{fundamental diffeo between moduli} in \eqref{fundamental moduli on Euclidean space}, we obtain a parameterized moduli space on $T^{n}$.

\end{rem}

In the argument above, of course, it is sufficient to assume $b^+(X) \geq n + 1$ to avoid the wall.
Namely, we have not used $\pi_{n}(\circPi) = 0$.
We use the assumption $b^+(X) \geq n+2$ to do the argument by cobordism in the following lemma.
Using the idea of Theorem 2.2 in Ruberman~\cite{MR1671187}, we have:

\begin{lem}
\label{lem : well-def. of simple invariant}
Assume that  $f_1, \ldots, f_n \in \Diff(X, \fraks)$ are commuting.
\begin{enumerate}
\item If all of $f_1, \ldots, f_n$ preserve the given homology orientation, then the element
\[
\SWinv(f_1, \ldots, f_n; \fraks; \vp_{\bullet})
\]
 in $\Z$ is independent of the choice of $\vp_{\bullet}$.
\item If at least one of $f_1, \ldots, f_n$ reverses the homology orientation, then the element
\[
\SWinv(f_1, \ldots, f_n; \fraks; \vp_{\bullet}) \mod 2
\]
in $\Z/2$ is independent of the choice of $\vp_{\bullet}$.
\end{enumerate}
\end{lem}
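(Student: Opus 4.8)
The plan is to adapt the cobordism argument of Theorem~2.2 in Ruberman~\cite{MR1671187} to the higher-dimensional setting. Let $\vp_\bullet^{0}=\{\vp^{0}_{k}\}$ and $\vp_\bullet^{1}=\{\vp^{1}_{k}\}$ be two admissible systems of generic maps produced by the construction above, based at points $p^{0},p^{1}\in\circPi$ respectively. I would exhibit a compact oriented $1$-dimensional cobordism between the $0$-manifolds $\calM(\vp^{0}_{n}(f_{1},\ldots,f_{n}),\fraks)$ and $\calM(\vp^{1}_{n}(f_{1},\ldots,f_{n}),\fraks)$. To this end I would build a whole hierarchy of interpolating families $\psi_{k}(f_{i_{1}},\ldots,f_{i_{k}}):[0,1]^{k}\times[0,1]\to\circPi$, for $k=0,\ldots,n$, where the last coordinate $s$ is an interpolation parameter, with $\psi_{k}|_{s=0}=\vp^{0}_{k}$ and $\psi_{k}|_{s=1}=\vp^{1}_{k}$, and satisfying on $\partial[0,1]^{k}\times[0,1]$ the conditions mirroring \eqref{eq: facet time-0} and \eqref{eq: facet time-1}: $\psi_{k}$ restricts to $\psi_{k-1}(f_{i_{1}},\ldots,\hat{f}_{i_{j}},\ldots,f_{i_{k}})$ on $F^{k-1}_{0,j}\times[0,1]$ and to $f_{i_{j}}^{\ast}\psi_{k-1}(f_{i_{1}},\ldots,\hat{f}_{i_{j}},\ldots,f_{i_{k}})$ on $F^{k-1}_{1,j}\times[0,1]$.

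The hierarchy is constructed by induction on $k$. For $k=0$ one needs a path $\psi_{0}:\{0\}\times[0,1]\to\circPi$ from $p^{0}$ to $p^{1}$, which exists because $\pi_{0}(\circPi)=0$. For the inductive step, note that $[0,1]^{k}\times[0,1]\cong[0,1]^{k+1}$ and that the data prescribed for $\psi_{k}$ lives exactly on $\partial([0,1]^{k+1})\cong S^{k}$: it is given by $\vp^{0}_{k}$ and $\vp^{1}_{k}$ on $[0,1]^{k}\times\{0,1\}$ and by $\psi_{k-1}$ together with its $f_{i_{j}}^{\ast}$-pull-backs on $\partial[0,1]^{k}\times[0,1]$. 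These prescriptions agree along the corners where distinct faces meet, by the defining properties of $\vp^{0}_\bullet$, $\vp^{1}_\bullet$ and $\psi_{k-1}$, the commutativity $f_{i}f_{j}=f_{j}f_{i}$, and compatibility of pull-back with restriction; this is the same consistency check that made the original construction of the $\vp_\bullet$ possible. Since $\circPi\simeq S^{b^{+}-1}$, the hypothesis $b^{+}\geq n+2$ yields $\pi_{k}(\circPi)=0$ for every $k\leq n$, so the resulting map $S^{k}\to\circPi$ is null-homotopic and extends over the disk $[0,1]^{k+1}$; a further small perturbation relative to the already generic boundary data (the standard Sard--Smale argument for families of \SW equations) lets us take $\psi_{k}$ so that the parameterized moduli spaces over $[0,1]^{k+1}$ and over each of its faces are regular. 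Only the top step, producing $\psi_{n}$, uses the full strength $b^{+}\geq n+2$.

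With $\psi_{n}:=\psi_{n}(f_{1},\ldots,f_{n})$ in hand, consider the parameterized moduli space $\calM(\psi_{n},\fraks)$ over $[0,1]^{n}\times[0,1]$. Since $d(\fraks)=-n$ its expected dimension is $1$, and it is compact because the unparameterized \SW moduli spaces are compact, $[0,1]^{n+1}$ is compact, and $\psi_{n}$ takes values in $\circPi$ so that no reducible appears; hence $\calM(\psi_{n},\fraks)$ is a compact oriented $1$-manifold with boundary. Its boundary is the moduli space over $\partial([0,1]^{n+1})$. On faces of codimension $\geq 2$ the expected dimension is $\leq-1$, so these contribute nothing; the nonempty contributions come from $[0,1]^{n}\times\{0\}$ and $[0,1]^{n}\times\{1\}$, giving $\calM(\vp^{0}_{n},\fraks)$ and $\calM(\vp^{1}_{n},\fraks)$, and from $F^{n-1}_{0,j}\times[0,1]$ and $F^{n-1}_{1,j}\times[0,1]$ for $j=1,\ldots,n$, giving $\calM(\psi_{n-1}(f_{1},\ldots,\hat{f}_{j},\ldots,f_{n}),\fraks)$ and $f_{j}^{\ast}\calM(\psi_{n-1}(f_{1},\ldots,\hat{f}_{j},\ldots,f_{n}),\fraks)$ respectively. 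For each fixed $j$ the two faces $F^{n-1}_{0,j}\times[0,1]$ and $F^{n-1}_{1,j}\times[0,1]$ carry opposite orientations in $\partial([0,1]^{n+1})$, and $f_{j}$ identifies their moduli spaces as in \cref{remark on family on the torus}.

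Finally I would run the sign count. The identification of $\calM(\psi_{n-1},\fraks)$ with $f_{j}^{\ast}\calM(\psi_{n-1},\fraks)$ induced by $f_{j}$ is orientation-preserving precisely when $f_{j}$ preserves the homology orientation, and orientation-reversing when $f_{j}$ reverses it, the two orientations differing by the sign of the action of $f_{j}$ on $H^{1}(X;\R)\oplus H^{+}(X;\R)$. Hence, in case~(1), where every $f_{j}$ preserves the homology orientation, the contributions of $F^{n-1}_{0,j}\times[0,1]$ and $F^{n-1}_{1,j}\times[0,1]$ cancel in $\partial\calM(\psi_{n},\fraks)$ for each $j$; since the signed count of the boundary of a compact oriented $1$-manifold is zero, this gives $\#\calM(\vp^{0}_{n},\fraks)=\#\calM(\vp^{1}_{n},\fraks)$, i.e.\ assertion~(1). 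In case~(2) these paired contributions need no longer cancel over $\Z$, but they still cancel modulo $2$, which yields assertion~(2). I expect the main obstacle to be the bookkeeping: organizing the inductive construction of the entire hierarchy $\psi_\bullet$ so that all boundary-compatibility conditions hold at once, and tracking the induced orientations on every face of $[0,1]^{n+1}$. The transversality and compactness inputs are the usual ones for families of \SW equations, and the only topological input required is $\pi_{k}(\circPi)=0$ for $k\leq n$, which is exactly what $b^{+}(X)\geq n+2$ supplies.
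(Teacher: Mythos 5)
Your proof is correct and follows essentially the same cobordism argument as the paper: construct a generic interpolation $\psi:[0,1]^{n+1}\to\circPi$ satisfying the face conditions (existence guaranteed by $\pi_k(\circPi)=0$ for $k\le n$, i.e.\ $b^+\ge n+2$), obtain a compact oriented $1$-dimensional parameterized moduli space, and cancel boundary contributions on the pairs $F^{n-1}_{0,j}\times[0,1]$ and $F^{n-1}_{1,j}\times[0,1]$ via the diffeomorphism induced by $f_j$. Your explicit inductive construction of the full hierarchy $\psi_k$ is simply a more careful spelling-out of the paper's single step of choosing $\psi$ compatibly on $\partial[0,1]^{n+1}$; the argument and its orientation bookkeeping are the same.
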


\begin{proof}
For two families $\vp_{\bullet}$ and $\vp_{\bullet}'$, we can take a generic map
\[
\psi : [0,1]^{n+1} = [0,1]^n \times [0,1] \to \circPi
\]
such that
\begin{itemize}
\item on $[0,1]^n \times \{0\}$, the map $\psi$ coincides with $\vp_n(f_{1}, \ldots, f_{n})$,
\item on $[0,1]^n \times \{1\}$, the map $\psi$ coincides with $\vp_n'(f_{1}, \ldots, f_{n})$, and
\item for each $j \in \{1, \ldots, n\}$, the map $\psi$ satisfies $\psi|_{F_{1,j}^{n-1} \times [0,1]} = f_j^{\ast}\psi|_{F_{0,j}^{n-1} \times [0,1]}$
\end{itemize}
since $\pi_{n}(\circPi)$ is trivial.
Here $F_{0,j}^{n-1}$ and $F_{1,j}^{n-1}$ are the facets of $[0,1]^n$ obtained by putting $k=n-1$ in \eqref{eq: facet time-0} and \eqref{eq: facet time-1}.  
Since there are no reducibles on the parameterized moduli space given by $\psi$, we have a $1$-dimensional compact manifold with boundary as a parameterized moduli space on $[0,1]^{n+1}$ given by $\psi$.
Its boundary components are $\calM(\vp_n(f_{1}, \ldots, f_{n}), \fraks),\ \calM(\vp_n'(f_{1}, \ldots, f_{n}), \fraks)$, and the moduli spaces parameterized on $F_{0,j}^{n-1} \times [0,1]$ and $F_{1,j}^{n-1} \times [0,1]$ for each $j \in \{1, \ldots, n\}$.
Note that $f_{j}$ induces a diffeomorphism similar to \eqref{fundamental diffeo between moduli} between the last two components:
\begin{align}
\calM(\psi, \fraks)|_{F_{0,j}^{n-1} \times [0,1]} \cong \calM(\psi, \fraks)|_{F_{1,j}^{n-1} \times [0,1]}.
\label{diffeomorphim on moduli spcaces}
\end{align}
If $f_j$ preserves the homology orientation, this diffeomorphism \eqref{diffeomorphim on moduli spcaces} preserves the orientation of the moduli spaces.
Hence, if all of $f_1, \ldots, f_n$ preserve the homology orientation, all contributions in the counting argument on $F_{0,j}^{n-1} \times [0,1]$ and $F_{1,j}^{n-1} \times [0,1]$ are canceled in $\Z$.
We thus have $\#\calM(\vp_n(f_{1}, \ldots, f_{n}), \fraks) = \# \calM(\vp_n'(f_{1}, \ldots, f_{n}), \fraks)$ in $\Z$.
If there exists $j$ such that $f_j$ reverses the homology orientation, this cancellation holds over $\Z/2$.
\end{proof}

By \cref{lem : well-def. of simple invariant}, for commuting diffeomorphisms $f_1, \ldots, f_n \in \Diff(X, \fraks)$, we can define
\[
\SWinv(f_1, \ldots, f_n; \fraks) \in \Z \ {\rm or}\ \Z/2
\]
as $\SWinv(f_1, \ldots, f_n; \fraks; \vp_{\bullet})$ for a family of perturbations $\vp_{\bullet}$.
Here, if all of $f_1, \ldots, f_n$ preserve the given homology orientation, then $\SWinv(f_1, \ldots, f_n; \fraks)$ is defined in $\Z$, and if at least one of $f_1, \ldots, f_n$ reverses the homology orientation, then $\SWinv(f_1, \ldots, f_n; \fraks)$ is defined in $\Z/2$.
This is a higher-dimensional generalization of Ruberman's invariant of diffeomorphisms given in~\cite{MR1671187}.
Namely, the case that $n=1$ for the above $\SWinv(f_1, \ldots, f_n; \fraks)$ is Ruberman's invariant.

\begin{rem}
By a similar procedure to define $\SWinv(f_1, \ldots, f_n; \fraks)$, one can also define a higher-dimensional analogue of Ruberman's refined invariant given in \cite{MR1874146} written as $\SWinv_{tot}$.
However, at this stage, the author cannot find any application of the higher-dimensional $\SWinv_{tot}$ which cannot obtained from either the original $\SWinv_{tot}$ or $\SWinv(f_1, \ldots, f_n; \fraks)$ defind above.
\end{rem}

\subsection{Relation to the space of PSC metrics}
\label{subsection: Relation to the space of PSC metrics}

In this subsection we follow the all of the notations of \cref{subsection Definition of the invariant}.
Let us denote by $\PSC(X)$ the space of PSC metrics on $X$.
As Ruberman's invariant~\cite{MR1874146}, the topology of $\PSC(X)$ relates to the triviality of this invariant.

\begin{prop}
\label{vanishing of homotopy groups and invariant}
Suppose that $\PSC(X)$ is non-empty and that $\pi_i(\PSC(X))$ is trivial for all $i \in \{0, \ldots ,n-1\}$.
Then, for any commuting elements $f_1, \ldots, f_n \in \Diff(X, \fraks)$,
\[
\SWinv(f_1, \ldots, f_n; \fraks) = 0
\]
holds.
\end{prop}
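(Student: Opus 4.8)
The plan is to use the positivity of scalar curvature to kill the Seiberg--Witten moduli space along a carefully chosen family of perturbations, and then invoke \cref{lem : well-def. of simple invariant} to conclude that $\SWinv(f_1, \ldots, f_n; \fraks)$ computed with this family is zero. First I would recall the classical vanishing input: if $g$ is a PSC metric, then by the Weitzenb\"ock formula the unperturbed \SW equations with respect to $g$ admit no solution, i.e. $\calM((g,0),\fraks) = \emptyset$; more generally there is an explicit $C^0$-small bound on $\mu$ (depending continuously on $g$) such that $\calM((g,\mu),\fraks) = \emptyset$. What I want is a single map $\vp_n(f_1,\ldots,f_n) : [0,1]^n \to \circPi$ whose image lies entirely in the ``PSC region'' of $\circPi$ where no irreducible solutions exist; if such a $\vp_\bullet$ exists then $\calM(\vp_n(f_1,\ldots,f_n),\fraks)$ is empty, so $\SWinv(f_1,\ldots,f_n;\fraks;\vp_\bullet) = 0$, and by \cref{lem : well-def. of simple invariant} this equals $\SWinv(f_1,\ldots,f_n;\fraks)$.

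The construction of such a family is where the hypothesis $\pi_i(\PSC(X)) = 0$ for $i \in \{0,\ldots,n-1\}$ enters. I would set up, over $\PSC(X)$, the sub-bundle of $\circPi$ consisting of pairs $(g,\mu)$ with $g$ PSC and $\mu$ in the (nonempty, convex, hence contractible) fiberwise neighborhood of $0$ on which the perturbed equations have no solution; call its total space $\circPi^{\PSC}$. The projection $\circPi^{\PSC} \to \PSC(X)$ has contractible fibers, so $\circPi^{\PSC} \simeq \PSC(X)$. Since each $f_j \in \Diff(X,\fraks)$ acts on $\PSC(X)$ by pullback and this action lifts to $\circPi^{\PSC}$, I can redo the inductive construction of $\vp_k$ from \cref{subsection Definition of the invariant} entirely inside $\circPi^{\PSC}$: pick the base point $p$ in $\circPi^{\PSC}$ (possible since $\PSC(X) \neq \emptyset$), and at the $(k{+}1)$-st stage the obstruction to filling in $\vp_{k+1}$ with the prescribed boundary values lies in $\pi_k(\circPi^{\PSC}) \cong \pi_k(\PSC(X))$, which vanishes for $k \le n-1$. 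This yields a generic $\vp_n(f_1,\ldots,f_n)$ landing in $\circPi^{\PSC} \subset \circPi$, as desired. One point to check is genericity: the maps produced this way may need a small perturbation to achieve transversality, but since the vanishing of the moduli space is an open condition (the no-solution neighborhood of $0$ has positive $C^0$-radius depending continuously on $g$), a sufficiently small perturbation keeps the image inside the region with empty moduli space while making the family generic.

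The main obstacle I expect is the compatibility of the inductive boundary conditions with staying inside $\circPi^{\PSC}$: when building $\vp_{k+1}$, its restrictions to the facets $F^k_{0,j}$ and $F^k_{1,j}$ are forced to be $\vp_k(\ldots,\hat f_{i_j},\ldots)$ and $f_{i_j}^\ast \vp_k(\ldots,\hat f_{i_j},\ldots)$ respectively, so I need the $f_j$-action on $\circPi^{\PSC}$ to be well-defined and to cover the given data, which in turn requires that $f_j^\ast$ preserves PSC-ness (clear, since scalar curvature is a diffeomorphism invariant) and preserves the no-solution neighborhood of $0$ (clear, since $f_j$ preserves $\fraks$, hence commutes with the whole \SW setup up to the gauge ambiguity already noted in \cref{remark on family on the torus}). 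Granting this, the inductive obstruction theory over the cube-with-prescribed-boundary reduces exactly to the homotopy groups $\pi_k(\circPi^{\PSC})$ for $k = 0,\ldots,n-1$, so the hypothesis is precisely what is needed. Finally I would remark that one does not even need $b^+(X) \ge n+2$ here beyond what was already assumed, since the well-definedness of $\SWinv$ was established in \cref{lem : well-def. of simple invariant} and we are only evaluating it with one convenient choice of family.
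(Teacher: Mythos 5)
Your overall strategy (construct the family $\vp_\bullet$ so that its image avoids all solutions, hence the counted moduli space is empty, then invoke \cref{lem : well-def. of simple invariant}) is the same as the paper's, and your observation that $f_j^\ast$ preserves PSC-ness and the no-solution region is correct and necessary. But there is a genuine gap in the key homotopy-theoretic step.

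You claim the fiber of $\circPi^{\PSC} \to \PSC(X)$ over a PSC metric $g$ is ``nonempty, convex, hence contractible,'' and deduce $\circPi^{\PSC} \simeq \PSC(X)$. That fiber is $\{\mu \in \Omega^+_g : |\mu| < \epsilon_g,\ \mu \notin \calW_g\}$: a small ball with the affine wall $\calW_g = F_{A_0}^{+_g} + \im d^{+_g}$ removed (removing it is not optional, since $\mu \in \calW_g$ produces a reducible solution, and you must in any case land in $\circPi$). When $c_1(\fraks)$ is torsion, $\calW_g$ is a linear subspace through $0$ for \emph{every} metric, so every fiber is a punctured ball, homotopy equivalent to $S^{b^+-1}$ rather than a point. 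When $c_1(\fraks)^2 < 0$, the wall meets $\Met(X)$ in a codimension-$b^+$ set, so the fibers have varying homotopy type (contractible off that set, $\simeq S^{b^+-1}$ on it) and $\circPi^{\PSC} \to \PSC(X)$ is not even a fibration, so the homotopy equivalence you want cannot be read off from the fibers. Only when $c_1(\fraks)^2 > 0$, or $c_1(\fraks)^2 = 0$ with $c_1(\fraks)$ non-torsion, is $\Met(X) \cap \calW = \emptyset$, making your convexity claim valid. Your closing remark that $b^+(X) \ge n+2$ plays no role here is a symptom of the same oversight: the bound is exactly what makes $S^{b^+-1}$ (and removal of a codimension-$b^+$ subset) invisible to $\pi_0, \ldots, \pi_{n-1}$, and the argument cannot be made to work without it.

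The paper's proof splits into the three cases above rather than attempting a single bundle. In the first case it builds the $\vp_k$ directly inside $\PSC(X)$; in the second it works in $\PSC(X) \setminus \calW$, whose $\pi_i$ agree with those of $\PSC(X)$ for $i \le b^+ - 2 \ge n$; in the torsion case it first builds maps $\bar\vp_k$ into $\PSC(X)$ and then lifts them inductively to the $S^{b^+-1}$-bundle $\circPi \cap D \to K$ over their image, the lifts existing because the fiber is $(b^+-2)$-connected with $b^+ - 2 \ge n$. Your picture can be repaired along similar lines (replace ``contractible fiber'' by ``$(b^+-2)$-connected fiber'' in the torsion case, and treat $c_1^2 < 0$ via the codimension argument rather than a bundle), but as written the proposal asserts a false contractibility that the proof genuinely relies on.
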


\begin{proof}
Let us regard $\PSC(X) \subset \Met(X) \subset \Pi$.
We first consider the case that either $c_1(\fraks)^2 > 0$, or $c_1(\fraks)^2 = 0$ and $c_1(\fraks)$ is not torsion.
In these cases, we have $\Met(X) \cap \calW = \emptyset$, in particular $\PSC(X) \cap \calW = \emptyset$.
By our assumption, we can construct maps
\[
\vp_k(f_{i_1}, \ldots, f_{i_k}) : [0,1]^k \to \PSC(X)
\]
for distinct $i_1, \ldots, i_k \in \{1, \ldots, n\}$ inductively with respect to $k \in \{0, \ldots ,n\}$ as in the definition of $\SWinv(f_1, \ldots, f_n; \fraks)$.
Note that there is no reducible in the moduli space $\calM(\vp_n(f_{1}, \ldots, f_{n}), \fraks)$.
We therefore have $\calM(\vp_n(f_{1}, \ldots, f_{n}), \fraks) = \emptyset$ by the a priori estimate for spinors of solutions to the \SW equations.
We thus obtain $\SWinv(f_1, \ldots, f_n; \fraks) = 0$.

Next let us consider the case that $c_1(\fraks)^2 < 0$.
In this case, $\Met(X) \cap \calW$ is a codimension-$b^+$ subspace of $\Met(X)$.
(See Subsubsection~4.3.3 in Donaldson--Kronheimer~\cite{MR1079726}.)
Since $\PSC(X)$ is an open subspace of $\Met(X)$, $\PSC(X) \cap \calW$ is a codimension-$b^+$ subspace of $\PSC(X)$.
All of $\pi_0(\PSC(X) \setminus \calW), \ldots, \pi_{n-1}(\PSC(X) \setminus \calW)$ are hence trivial by our assumption.
We can therefore construct maps into $\PSC(X) \setminus \calW$
\[
\vp_k(f_{i_1}, \ldots, f_{i_k}) : [0,1]^k \to \PSC(X) \setminus \calW
\]
inductively.
Since we avoid the wall, we obtain $\SWinv(f_1, \ldots, f_n; \fraks) = 0$ by the same argument above.

Finally, let us consider the case that $c_1(\fraks)$ is torsion.
In this case, we have $\PSC(X) \subset \Met(X) \subset \calW$.
 We have to therefore avoid the wall by ``lifting" the above argument to $\circPi$.
We can construct maps
\[
\bar{\vp}_k(f_{i_1}, \ldots, f_{i_k}) : [0,1]^k \to \PSC(X)
\]
inductively by the same way, and next we construct a lift $\vp_k(\cdot, \ldots, \cdot)$ of $\bar{\vp}_k(\cdot, \ldots, \cdot)$ to $\circPi$ inductively as follows.
Set $K = \bar{\vp}_n(f_1, \ldots, f_n)([0,1]^n)$.
Let $D = \bigsqcup_{g} D_g \to K$ be the disk bundle with a small radius of $\bigsqcup_{g} \Omega^+_g \to K$.
(The precise condition on the radius of the disk which we have to assume is given below.)
Then, 
\[
\circPi \cap D \to K
\]
is a fiber bundle whose fiber is homotopy equivalent to $S^{b^+-1}$.
Let $E \to [0,1]^n$ denote the pull-back of $\circPi \cap D \to K$ by $\bar{\vp}_n(f_1, \ldots, f_n) : [0,1]^n \to K (\subset \PSC(X))$.
Fix a small element of $\circPi_{\bar{\vp}_0(0)} \cap \Omega^+_{\bar{\vp}_0(0)}$.
This gives a lift
\[
\vp_0 : \{0\} \to \circPi \cap D
\]
of $\bar{\vp}_0 : \{0\} \to K$.
From this element and the pulled-back elements by $f_i$ $(i = 1, \ldots, n)$, we have  a section of the bundle $E$ over the $0$-dimensional faces of $[0,1]^n$.
Since the fiber of $E$ is homotopy equivalent to $S^{b^+-1}$, this section can be extended to a section on the $1$-dimensional faces of $[0,1]^n$ so that the extended section gives a lift
\[
\vp_1(f_i) : [0,1] \to \circPi \cap D 
\]
of $\bar{\vp}_1(f_i) : [0,1] \to K$ for $i = 1, \ldots, n$.
Next, for $k \in \{1, \ldots, n-1\}$, assume that we have fixed lifts $\vp_{l}(\cdot, \ldots, \cdot)$ of $\bar{\vp}_{l}(\cdot, \ldots, \cdot)$  $(0 \leq l \leq k)$.
Then we can extend these lifts on the boundary $\del [0,1]^{k+1}$ to ones on all of $[0,1]^{k+1}$ by the same way.
By this inductive construction we can obtain a lift
\[
\vp_{k}(f_{i_1}, \ldots, f_{i_k}) : [0,1]^k \to \circPi \cap D 
\]
of $\bar{\vp}_{k}(f_{i_1}, \ldots, f_{i_k})$ for any $k \in \{0, \ldots, n\}$ and any distinct $i_1, \ldots, i_k$.
We now give the precise condition on the radius of the above disk bundle $D \to K$.
For any $g \in \PSC(X)$, let us denote by $s_g : X \to \R$ the scalar curvature with respect to $g$.
Then the subspace
\begin{align}
\Set{ \mu \in \Omega^+_g | -\min_{x \in X} s_g(x) + 2|\mu| < 0,\ -\min_{x \in X} s_{f_i^\ast g}(x) + 2|f_i^\ast \mu| < 0\ (1 \leq i \leq n)}
\label{open subspace of Omegaplus}
\end{align}
of $(\Omega^+_g, |\cdot|)$ is an open neighborhood of the origin of $\Omega^+_g$, where $|\cdot|$ denotes the $C^0$-norm.
Let $\epsilon_g > 0$ be the maximal radius of open balls centered at the origin included in this open neighborhood \eqref{open subspace of Omegaplus} of the origin.
Then $\epsilon := \min_{g \in K} \epsilon_g$ also satisfies $\epsilon > 0$ since $\epsilon_g$ continuously depends on $g$.
Use this $\epsilon$ as the radius of the disk bundle.
Then, by the a priori estimate for spinors, we have $\calM(\vp_k(f_{1}, \ldots, f_{n}), \fraks) = \emptyset$,
and hence $\SWinv(f_1, \ldots, f_n; \fraks) = 0$.
\end{proof}

\subsection{A generalized invariant}
\label{subsection A generalized invariant}

In this subsection we note a generalization of the invariant $\SWinv(f_1, \ldots, f_n; \fraks)$ based on the idea of the construction of a family on the $n$-torus in \cref{remark on family on the torus}.
Although the contents of this subsection and \cref{subsection: Invariant as an obstruction} are not necessary for the proof of \cref{application to PSC}, we will use them in \cref{subsection: Application to the extension problem for families of 4manifolds}, where we discuss an obstruction to extensions of families of $4$-manifolds.
We note that, if we consider a family of $\spc$ $4$-manifolds, the generalized invariant is a special case of the usual family \SW invariant given in Li--Liu~\cite{MR1868921}.
However, since our structure group of a family is $\Diff(X, \fraks)$, we need the following discussion.
(See the following \cref{remark on Diff rather than Aut}.)

Let $(X, \fraks)$ be a closed smooth $\spc$ $4$-manifold equipped with a homology orientation $\mathcal{O}$, $M$ be an $n$-dimensional closed smooth manifold.
Suppose that $d(\fraks) = -n$ and $b^{+}(X) \geq n + 2$.
Let $\rho : M \to B\Diff(X, \fraks)$ be a continuous map and $E_{X} \to M$ be the bundle corresponding to $\rho$.
This is a bundle whose fiber is $X$ with structure group $\Diff(X, \fraks)$.
Suppose that $E_{X}$ satisfies the following condition on smoothness:
we can choose transition functions $\{ g_{\alpha\beta} : U_{\alpha} \cap U_{\beta} \to \Diff(X, \fraks) \}_{\alpha, \beta}$ of $E_{X}$ so that the map $(U_{\alpha} \cap U_{\beta}) \times X \to X$ given by $(p,x) \mapsto g_{\alpha\beta}(p)x$ is smooth for any $\alpha, \beta$.
Let $\Diff^{+}(X)$ be the group of diffeomorphism preserving the orientation of $X$.
Via the inclusion $\Diff(X, \fraks) \inc \Diff^{+}(X)$,  the actions of $\Diff^{+}(X)$ on $X$ and on $\circPi(X)$ induce bundles $E_{X} \to M$ and $E_{\circPi} \to M$ whose fibers are $X$ and $\circPi(X)$ respectively.
Let us take a smooth section $s : M \to E_{\circPi}$.
As remarked in Section~5 in Nakamura~\cite{MR2644908}, we can consider the parameterized moduli space $\calM(\rho, s, \fraks)$ as follows.

\begin{rem}
\label{remark on Diff rather than Aut}
Before giving the construction of $\calM(\rho, s, \fraks)$, we note that, a priori, it is non-trivial how to construct such a moduli space $\calM(\rho, s, \fraks)$ using a section of $E_{\circPi}$.
To explain it, assume that we have a homomorphism $\tilde{\rho} : M \to B\Aut(X, \fraks)$, where $\Aut(X, \fraks)$ is the group of pairs $(f, \tilde{f})$ consisting of $f \in \Diff(X,\fraks)$ and a $Spin_{GL}^{c}(4)$-equivariant map $\tilde{f} : P_{GL} \to P_{GL}$ satisfying
\[
\xymatrix{
    P_{GL} \ar[d] \ar[r]^{\tilde{f}} & P_{GL} \ar[d] \\
    X \ar[r]_{f} & X  \ar@{}[lu]|{\circlearrowright},
    }
\quad
\xymatrix{
    P_{GL} \ar[d] \ar[r]^{\tilde{f}} & P_{GL} \ar[d] \\
    \FrGL(X) \ar[r]_{df} & \FrGL(X)  \ar@{}[lu]|{\circlearrowright}.
    }
\]
We have a natural surjection $\Aut(X, \fraks) \to \Diff(X, \fraks)$, and therefore obtain 
\[
X \to \tilde{E}_{X} \to M,\ \circPi(X) \to \tilde{E}_{\circPi} \to M
\]
using $\tilde{\rho}$.
Each fiber of $\tilde{E}_{X} \to M$ has a natural $\spcGL$ $4$-manifold structure, and therefore a $\spc$ structure if we give a metric.
From this, if a section $s : M \to \tilde{E}_{\circPi}$ is given, we can consider a family of \SW equations with respect to $s$, and hence obtain the parameterized moduli space $\calM(\tilde{\rho}, s, \fraks)$ as usual.
However, each fiber of the bundle $E_{X} \to M$ above has {\it no} natural $\spcGL$ structure.
We cannot therefore obtain the parameterized moduli space from a section of $E_{\circPi} \to M$ by the entirely same way to construct $\calM(\tilde{\rho}, s, \fraks)$.
To define $\calM(\rho, s, \fraks)$ above, we need a ``local version'' of the argument used to show that the diffeomorphism \eqref{fundamental diffeo between moduli} is independent of the choice of lift of $f_{j}$.
\end{rem}

Let us take an open covering $\{U_{\alpha}\}_{\alpha}$ of $M$ satisfying that $U_{\alpha} \cap U_{\beta}$ is contractible for any $\alpha, \beta$.
Henceforth we abbreviate $U_{\alpha} \cap U_{\beta}$ to $U_{\alpha\beta}$ and $U_{\alpha} \cap U_{\beta} \cap U_{\gamma}$ to $U_{\alpha\beta\gamma}$ respectively.
Take a system of local trivializations of $E_{X} \to M$ on this covering
and let $\{ g_{\alpha\beta} : U_{\alpha\beta} \to \Diff(X, \fraks) \}_{\alpha, \beta}$ be the transition functions corresponding to this system of local trivializations.
Since $U_{\alpha\beta}$ is contractible for each $\alpha, \beta$, there exists a lift $\tilde{g}_{\alpha\beta} : U_{\alpha\beta} \to \Aut(X, \fraks)$ of $g_{\alpha\beta}$.
Let $\Aut_{X}(\fraks)$ be the kernel of $\Aut(X, \fraks) \to \Diff(X, \fraks)$:
we have the exact sequence
\[
1 \to \Aut_{X}(\fraks) \to \Aut(X, \fraks) \to \Diff(X, \fraks) \to 1.
\]
The group $\Aut_{X}(\fraks)$ is isomorphic to the gauge group $\gauge \cong \Map(X, S^{1})$.
Note that we have $\tilde{g}_{\alpha\beta}\tilde{g}_{\beta\gamma}\tilde{g}_{\gamma\alpha}(p) \in \Aut_{X}(\fraks) \cong \gauge$ for any $p \in U_{\alpha\beta\gamma}$ since $\{g_{\alpha\beta}\}$ satisfies the cocycle condition.
The given section $s : M \to E_{\circPi}$ corresponds to a system of maps $\{s_{\alpha} : U_{\alpha} \to \circPi(X)\}_{\alpha}$ satisfying that $s_{\alpha} = g_{\alpha\beta} \cdot s_{\beta}$ on $U_{\alpha\beta}$.
Here the action $g_{\alpha\beta} \cdot s_{\beta}$ is given by the action of $\Diff(X, \fraks)$ on $\circPi(X)$ via $\Diff(X, \fraks) \inc \Diff^{+}(X)$, namely, $g_{\alpha\beta} \cdot s_{\beta} = g_{\alpha\beta}^{\ast} s_{\beta}$.
For each $\alpha$, let us write 
\[
\calM(\rho, s_{\alpha}, \fraks) := \bigsqcup_{p \in U_{\alpha}} \calM(s_{\alpha}(p), \fraks),
\]
where $\calM(s_{\alpha}(p), \fraks)$ is the moduli space with respect to $s_{\alpha}(p) \in \circPi(X)$ in usual sense, and
$s_{\alpha}$ is regarded as a section of the trivial bundle.
For each point $p \in U_{\alpha\beta}$,
we obtain an invertible map
\[
\tilde{g}_{\alpha\beta}(p)^{\ast} : \calM(s_{\beta}(p), \fraks) \to \calM(g_{\alpha\beta}(p)^{\ast} s_{\beta}(p), \fraks).
\]
Since the relation $s_{\alpha} = g_{\alpha\beta} \cdot s_{\beta} = g_{\alpha\beta}^{\ast} s_{\beta}$ on $U_{\alpha\beta}$ holds, we eventually have
\[
\tilde{g}_{\alpha\beta}^{\ast} : \calM(\rho, s_{\beta}, \fraks)|_{U_{\alpha\beta}} \to \calM(\rho, s_{\alpha}, \fraks)|_{U_{\alpha\beta}}.
\]
The composition
\[
\tilde{g}_{\alpha\beta}^{\ast} \circ \tilde{g}_{\beta\gamma}^{\ast} \circ \tilde{g}_{\gamma\alpha}^{\ast} : \calM(\rho, s_{\alpha}, \fraks) |_{U_{\alpha\beta\gamma}} \to \calM(\rho, s_{\alpha}, \fraks)|_{U_{\alpha\beta\gamma}}
\]
coincides with the identity since $\tilde{g}_{\alpha\beta}\tilde{g}_{\beta\gamma}\tilde{g}_{\gamma\alpha}(p) \in \Aut_{X}(\fraks) \cong \gauge$ holds.
This is again a consequence of the definition of the moduli space: it is the quotient space by the gauge group.
We can therefore obtain the well-defined quotient space
\[
\calM(\rho, s, \fraks) := \bigsqcup_{\alpha} \calM(\rho, s_{\alpha}, \fraks) / \sim,
\]
where the equivalence relation $\sim$ is given by the invertible maps $\{\tilde{g}^{\ast}_{\alpha\beta}\}$.
If $s$ is generic, each $s_{\alpha}$ is also generic and $\tilde{g}_{\alpha\beta}^{\ast}$ is a diffeomorphism between smooth manifolds.
The moduli space $\calM(\rho, s, \fraks)$ is hence also a smooth manifold.
If we have a $1$-parameter family of sections $\{s_{t} : M \to E_{\circPi}\}_{t \in [0,1]}$, it gives rise to a parameterized moduli space $\bigsqcup_{t \in [0,1]} \calM(\rho, s_{t}, \fraks)$ by the same way.
We can therefore do an argument by cobordism, and hence can define the invariant 
\begin{align}
\SWinv(\rho; \fraks) \in \Z \text{ or } \Z/2
\label{generalized invariant result}
\end{align}
as follows.
Since $b^{+}(X) \geq n+2$, for given two generic sections $s_{0}, s_{1} : M \to E_{\circPi}$, we can take a path of sections $\{s_{t} : M \to E_{\circPi}\}_{t \in [0,1]}$ between $s_{0}$ and $s_{1}$ such that $s_{\bullet}$ is generic as a map from $M \times [0,1]$ to $E_{\circPi}$.
We therefore obtain the parameterized moduli space $\bigsqcup_{t \in [0,1]}\calM(\rho, s_{t}, \fraks)$, whose boundary components are $\calM(\rho, s_{0}, \fraks)$ and $\calM(\rho, s_{1}, \fraks)$.
Let $\Diff(X, \fraks, \mathcal{O})$ be the group defined by
\[
\Diff(X, \fraks, \mathcal{O}) = \Set{f \in \Diff(X, \fraks) | f^{\ast}\mathcal{O} = \mathcal{O}}.
\]
By the argument by cobordism above, if $\rho(M) \subset B\Diff(X, \fraks, \mathcal{O})$ holds and if $M$ is oriented, the integer
\[
\SWinv(\rho; \fraks; s) := \# \calM(\rho, s, \fraks) \in \Z
\]
is independent of the choice of $s$, and otherwise $\SWinv(\rho; \fraks; s) \mod 2$ in $\Z/2$ is independent of $s$.
We can therefore define the invariant \eqref{generalized invariant result} as $\SWinv(\rho; \fraks; s)$ for a generic section $s$.

Given a group homomorphism $\Phi : \pi_{1}(M) \to \Diff(X, \fraks)$, we define 
$\SWinv(\Phi, \fraks) \in \Z \text{ or } \Z/2$ as $\SWinv(\rho, \fraks)$ for the classifying map $\rho$ of the bundle $X \to \tilde{M} \times_{\pi_{1}(M)} \to M$,
where $\tilde{M}$ is the universal covering of $M$.

\begin{ex}
\label{interpretation of SW for tuples}

Let $f_{1}, \ldots, f_{n} \in \Diff(X, \fraks)$ be commuting elements and $\Phi$ denote the map $\Z^{n} \to \left< f_{1}, \ldots, f_{n} \right> \subset \Diff(X, \fraks)$ given as $(k_{1}, \ldots, k_{n}) \mapsto f_{1}^{k_{1}} \cdots f_{n}^{k_{n}}$.
As described in \cref{remark on family on the torus}, the invariant $\SWinv(f_1, \ldots, f_n; \fraks)$ defined in \cref{subsection Definition of the invariant} can be  reinterpreted as the counting of the points of a parameterized moduli space on $T^{n}$.
By the construction of $\SWinv(f_1, \ldots, f_n; \fraks)$, the invariant $\SWinv(\Phi; \fraks)$ coincides with $\SWinv(f_1, \ldots, f_n; \fraks)$.
\end{ex}

\begin{rem}
\label{remark on use of obstruction theory}

Via the interpretation of $\SWinv(f_1, \ldots, f_n; \fraks)$ in \cref{interpretation of SW for tuples}, we can prove \cref{vanishing of homotopy groups and invariant} using obstruction theory.
We first consider the case that either $c_1(\fraks)^2 > 0$, or $c_1(\fraks)^2 = 0$ and $c_1(\fraks)$ is not torsion.
Via the composition
\[
\Phi : \Z^{n} \to \left< f_{1}, \ldots, f_{n} \right> \subset \Diff(X, \fraks) \inc \Diff^{+}(X),
\]
the action of $\Diff^{+}(X)$ on $\PSC(X)$ gives rise to a bundle 
$E_{\PSC} \to T^{n}$ whose fiber is $\PSC(X)$.
Note that $E_{\PSC} \subset E_{\circPi}$ holds.
The obstructions for the existence of a section of the bundle $E_{\PSC}$ live in $H^{i+1}(T^{n}; \hat{\pi}_{i}(\PSC(X)))$ $(i \in \{0, \ldots, n-1\})$, where $\hat{\pi}_{i}(\PSC(X))$ is a local system whose fiber is $\pi_{i}(\PSC(X))$.
Hence there exists a section $s : T^{n} \to E_{\PSC}$ if $\pi_i(\PSC(X))$ is trivial for any $i \in \{0, \ldots ,n-1\}$.
Using this section $s$ to calculate $\SWinv(f_{1}, \ldots, f_{n};\fraks) = \SWinv(\Phi ; \fraks)$, we have $\SWinv(f_{1}, \ldots, f_{n};\fraks) = 0$.
In the cases that $c_{1}(\fraks)^{2} < 0$ and that $c_{1}(\fraks)$ is torsion, by replacing $E_{\PSC}$ with the bundles given by the action of $\Diff^{+}(X)$ whose fibers are $\PSC(X) \cap \calW$ and $\circPi \cap D$ given in the proof of \cref{vanishing of homotopy groups and invariant} respectively, 
we can similarly show that $\SWinv(f_{1}, \ldots, f_{n};\fraks) = 0$.
\end{rem}

The argument of \cref{remark on use of obstruction theory} immediately gives the generalization of \cref{vanishing of homotopy groups and invariant}:

\begin{prop}
Let $(X, \fraks)$ be a closed smooth ${\it spin}^{c}$ $4$-manifold with $d(\fraks) = -n$ $(n \geq 1)$ and $M$ be an smooth closed $n$-dimensional manifold.
Suppose that $\PSC(X) \neq \emptyset$ holds and that $\pi_i(\PSC(X))$ is trivial for any $i \in \{0, \ldots ,n-1\}$.
Then,  for any group homomorphism $\Phi : \pi_{1}(M) \to \Diff(X, \fraks)$,
$\SWinv(\Phi; \fraks) = 0$ holds.
\end{prop}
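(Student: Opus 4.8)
The plan is to run, essentially verbatim, the obstruction-theoretic argument of \cref{remark on use of obstruction theory} with the $n$-torus replaced by the general base $M$; the one numerical input is that $\dim M = n$ is exactly low enough for the vanishing of $\pi_{i}(\PSC(X))$ in the range $i \le n-1$ to kill every obstruction that can occur. First I would let $\rho : M \to B\Diff(X, \fraks)$ be the classifying map of the flat bundle $X \to \tilde{M}\times_{\pi_{1}(M)}X \to M$ determined by $\Phi$ (its transition functions on a good cover are locally constant, hence smooth in the sense required to form $\calM(\rho,\cdot,\fraks)$), so that $\SWinv(\Phi; \fraks) = \SWinv(\rho; \fraks)$ by definition, and recall that $\SWinv(\rho; \fraks)$ may be computed as $\#\calM(\rho, s, \fraks)$ for any generic section $s : M \to E_{\circPi}$. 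Thus it suffices to exhibit one generic section whose parameterized moduli space is empty.

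To produce such a section I would split into the three cases used in the proof of \cref{vanishing of homotopy groups and invariant}: (i) $c_{1}(\fraks)^{2}>0$, or $c_{1}(\fraks)^{2}=0$ with $c_{1}(\fraks)$ non-torsion; (ii) $c_{1}(\fraks)^{2}<0$; (iii) $c_{1}(\fraks)$ torsion. In case (i), $\Met(X)\cap\calW = \emptyset$, and the action of $\Diff(X,\fraks)$ on $\PSC(X)$ (via $\Diff(X,\fraks)\inc\Diff^{+}(X)$) gives an open subbundle $E_{\PSC}\subset E_{\circPi}$ over $M$ with fibre $\PSC(X)$; the obstructions to a section of $E_{\PSC}\to M$ lie in $H^{i+1}(M;\hat{\pi}_{i}(\PSC(X)))$ with $i+1\le \dim M = n$, i.e.\ $i\le n-1$, all of which vanish, so a section exists and can be perturbed to a generic section of $E_{\circPi}$ still taking values in $E_{\PSC}$. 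In case (ii), $\calW$ meets $\Met(X)$ in a subspace of codimension $b^{+}(X)\ge n+2$, hence $\pi_{i}(\PSC(X)\setminus\calW)\cong\pi_{i}(\PSC(X)) = 0$ for $i\le n-1$, and the same count yields a generic section into the subbundle with fibre $\PSC(X)\setminus\calW$. In case (iii), $\PSC(X)\subset\calW$, so one passes to $\circPi$: over $\PSC(X)$ form the disk subbundle $D$ of $\bigsqcup_{g}\Omega^{+}_{g}$ of fibrewise radius $\epsilon(g):=\tfrac{1}{2}\min_{x\in X}s_{g}(x)>0$, a \emph{$\Diff(X,\fraks)$-invariant} continuous function of $g$ since scalar curvature is natural under pullback; the subspace $\circPi\cap D$ is then $\Diff(X,\fraks)$-invariant with each fibre $\simeq S^{b^{+}-1}$, so it descends to a subbundle $E_{\circPi\cap D}\subset E_{\circPi}$ over $M$ whose fibre $F$ fits in a fibration $S^{b^{+}-1}\to F\to\PSC(X)$; since $b^{+}(X)\ge n+2$, the long exact sequence gives $\pi_{i}(F)\cong\pi_{i}(\PSC(X)) = 0$ for $i\le n-1$, so obstruction theory again produces a generic section $s : M\to E_{\circPi\cap D}\subset E_{\circPi}$.

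It remains to check that in each case $\calM(\rho, s, \fraks) = \emptyset$. By construction $s$ avoids the wall, so the parameterized moduli space contains no reducibles; and fibrewise the perturbation $\mu$ satisfies $2|\mu| < s_{g}$ pointwise on $X$ — trivially in cases (i) and (ii), where $\mu = 0$, and by the choice of $\epsilon(g)$ in case (iii) — so the a priori estimate for the spinor part of any solution to the \SW equations forces $\Phi\equiv 0$, i.e.\ a reducible, a contradiction. Hence $\calM(\rho,s,\fraks)=\emptyset$, and therefore $\SWinv(\Phi;\fraks)=\SWinv(\rho;\fraks;s)=0$ (in $\Z$ or $\Z/2$, as the case may be).

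The main obstacle is case (iii): making the lift to $\circPi$ compatible with the structure group $\Diff(X,\fraks)$ forces the disk-bundle radius to be chosen as a $\Diff(X,\fraks)$-invariant positive function on $\PSC(X)$, rather than merely constrained along a finite list $f_{1},\dots,f_{n}$ as in the proof of \cref{vanishing of homotopy groups and invariant}; this is resolved by noting that $\min_{x}s_{g}(x)$ (and the $C^{0}$-norm of self-dual forms) is invariant under pullback by diffeomorphisms. One must also verify, as recorded above, that the $S^{b^{+}-1}$-direction contributes no obstruction in degrees $\le n$, which is precisely where the hypothesis $b^{+}(X)\ge n+2$ enters.
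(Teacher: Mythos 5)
Your proposal is correct and follows essentially the same approach as the paper, which proves this proposition simply by invoking the obstruction-theoretic argument of \cref{remark on use of obstruction theory} with $T^n$ replaced by the closed $n$-manifold $M$; you carry out that replacement faithfully, and your $\Diff(X,\fraks)$-invariant choice of disk radius $\epsilon(g)=\tfrac12\min_x s_g(x)$ in the torsion case is a clean way to make the subbundle $E_{\circPi\cap D}$ well defined globally. One minor imprecision: $E_{\PSC}$ is not an \emph{open} subbundle of $E_{\circPi}$ (the fibre $\PSC(X)\subset\Met(X)$ is the zero section of $\Pi$, hence of infinite codimension); what one actually uses is that the parameterized moduli space over the non-generic section valued in $E_{\PSC}$ is already empty by the a priori estimate, and emptiness persists under a small generic perturbation of the section by compactness, exactly as in the proof of \cref{vanishing of homotopy groups and invariant}.
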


\begin{rem}
In a subsequent paper \cite{Konno3}, the author developed family gauge theory over arbitrary topological space.
In \cite{Konno3}, the author showed that the smoothness of the base space of a family of $4$-manifold is not necessary to discuss family gauge theory.
More precisely, if the base space is not smooth, of course we cannot hope that the parameterized moduli space is smooth, but still we can ``count'' the parametrized moduli space if we can define the fundamental class of the base space (e.g. the case that the base space is a closed topological manifold, or more generally homology manifold).
The main reason why we can consider a family over a non-smooth base space is the author used the so-called virtual neighborhood technique in \cite{Konno3}.
The argument given in \cite{Konno3} ensures that any subtle stuff relating to the base space (e.g. non-reduced structure) does not cause any problem in our argument.
\end{rem}

\subsection{Obstruction to extension of families of $4$-manifolds}
\label{subsection: Invariant as an obstruction}

We can use our generalized invariant $\SWinv(\Phi;\fraks)$ to give an obstruction to extensions of families of $4$-manifolds with structure group $\Diff(X, \fraks)$.

\begin{prop}
\label{obatruction at the level of Diff}
Let $M$ be a closed smooth $n$-manifold with $n \geq 1$, $(X, \fraks)$ be a closed smooth ${\it spin}^{c}$ $4$-manifold with $d(\fraks) = -n$ and $b^{+}(X) \geq n + 2$, $\Phi : \pi_{1}(M) \to \Diff(X, \fraks)$ be a group homomorphism, and $\rho : M \to B\Diff(X, \fraks)$ be the classifying map of the bundle $X \to E_{X} \to M$ given  by the Borel construction with respect to the actions of $f_{1}, \ldots, f_{n}$ on $X$.
\begin{description}
\item[(i)]
\label{case that without homology ori}
Assume that $\SWinv(\Phi; \fraks) \neq 0$ in $\Z/2$.
Then, for any $(n+1)$-dimensional compact manifold $W$ with $\del W = M$, there exists no continuous map $\tilde{\rho} : W \to B\Diff(X, \fraks)$ such that the following diagram commutes:
\begin{align*}
\xymatrix{
    W \ar[dr]^{\tilde{\rho}} &  \\
    M \ar[r]_-{\rho} \ar@{}[u]|-*{\rotatebox{90}{$\subset$}} &  B\Diff(X, \fraks).
    }
\end{align*}
\item[(ii)]
\label{case that with homology ori}
Assume that $\im{\Phi} \subset \Diff(X, \fraks, \mathcal{O})$, $M$ is oriented, and that $\SWinv(\Phi; \fraks) \neq 0$ in $\Z$.
Let us regard $\rho$ as a map into $B\Diff(X, \fraks, \mathcal{O})$.
Then, for any $(n+1)$-dimensional compact oriented manifold $W$ with $\del W = M$, there exists no continuous map $\tilde{\rho} : W \to B\Diff(X, \fraks, \mathcal{O})$ such that the following diagram commutes:
\begin{align*}
\xymatrix{
    W \ar[dr]^{\tilde{\rho}} &  \\
    M \ar[r]_-{\rho} \ar@{}[u]|-*{\rotatebox{90}{$\subset$}} &  B\Diff(X, \fraks, \mathcal{O}).
    }
\end{align*}
\end{description}
\end{prop}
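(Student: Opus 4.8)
The plan is to argue by contradiction, running the cobordism argument used in \cref{subsection A generalized invariant} to prove well-definedness of $\SWinv(\rho;\fraks)$, but one dimension higher: over $W$ rather than over $M\times[0,1]$. Suppose in case \textbf{(i)} that an extension $\tilde\rho:W\to B\Diff(X,\fraks)$ of $\rho$ exists. Pulling back the universal bundle along $\tilde\rho$ and using the action of $\Diff^{+}(X)$ on $\circPi(X)$ via $\Diff(X,\fraks)\inc\Diff^{+}(X)$ exactly as in \cref{subsection A generalized invariant}, we obtain bundles $X\to E_{X}^{W}\to W$ and $\circPi(X)\to E_{\circPi}^{W}\to W$ whose restrictions to $M=\del W$ are the bundles $E_{X}\to M$ and $E_{\circPi}\to M$ determined by $\rho$. (The fiberwise smoothness condition imposed on $E_{X}$ can be arranged for $E_{X}^{W}$ as well; alternatively one replaces $W$ by a smooth handle decomposition and argues handle by handle.) Fix a generic section $s_{M}:M\to E_{\circPi}|_{M}$, so that $\SWinv(\Phi;\fraks)=\#\calM(\rho,s_{M},\fraks)\bmod 2$ by \cref{interpretation of SW for tuples}. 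Since the fiber $\circPi(X)$ is homotopy equivalent to $S^{b^{+}-1}$, hence $(b^{+}-2)$-connected with $b^{+}(X)-2\geq n$, and since $\dim W=n+1$, the obstructions to extending $s_{M}$ to a section $s:W\to E_{\circPi}^{W}$, lying in $H^{k}(W,M;\pi_{k-1}(S^{b^{+}-1}))$ for $1\leq k\leq n+1$, all vanish; after a further perturbation supported in the interior of $W$ we may assume $s$ is generic and $s|_{M}=s_{M}$.

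\textbf{The moduli space over $W$.}
Because the structure group of $E_{X}^{W}$ is $\Diff(X,\fraks)$, not $\Aut(X,\fraks)$, building the parameterized moduli space $\calM(\tilde\rho,s,\fraks)$ requires precisely the local gluing construction of $\calM(\rho,s,\fraks)$ recalled in \cref{subsection A generalized invariant}: choose an open cover $\{U_{\alpha}\}$ of $W$ with each $U_{\alpha\beta}$ contractible, lift the transition functions to $\tilde g_{\alpha\beta}:U_{\alpha\beta}\to\Aut(X,\fraks)$, form the local moduli spaces $\calM(\rho,s_{\alpha},\fraks)$, and glue via the maps $\tilde g_{\alpha\beta}^{\ast}$; the triple products $\tilde g_{\alpha\beta}\tilde g_{\beta\gamma}\tilde g_{\gamma\alpha}$ take values in $\Aut_{X}(\fraks)\cong\gauge$, hence act as the identity on moduli spaces, so the gluing is consistent and yields a well-defined space $\calM(\tilde\rho,s,\fraks)$. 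Genericity of $s$ makes it a smooth manifold of dimension $d(\fraks)+\dim W=-n+(n+1)=1$; it is compact by compactness of the unparameterized \SW moduli spaces together with that of $W$; and, since $\del W=M$ and $s|_{M}=s_{M}$, its boundary is exactly $\calM(\tilde\rho,s,\fraks)|_{\del W}=\calM(\rho,s_{M},\fraks)$. A compact $1$-manifold has an even number of boundary points, so $\#\calM(\rho,s_{M},\fraks)\equiv 0\pmod 2$, contradicting $\SWinv(\Phi;\fraks)\neq 0$ in $\Z/2$. This proves \textbf{(i)}.

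\textbf{The oriented case.}
For \textbf{(ii)}, the hypothesis $\im\Phi\subset\Diff(X,\fraks,\mathcal{O})$ means the homology orientation $\mathcal{O}$ is preserved by all transition functions, so the local moduli spaces $\calM(\rho,s_{\alpha},\fraks)$ are coherently oriented and the gluing maps $\tilde g_{\alpha\beta}^{\ast}$ are orientation-preserving (the ambiguity in the lift $\tilde g_{\alpha\beta}$ lies in the gauge group and does not affect orientations). Using the given orientations of $M$ and $W$, the moduli space $\calM(\tilde\rho,s,\fraks)$ becomes an oriented compact $1$-manifold whose oriented boundary is $\pm\calM(\rho,s_{M},\fraks)$; since the signed count of the boundary of a compact oriented $1$-manifold is $0$, we get $\SWinv(\Phi;\fraks)=\pm\#\calM(\rho,s_{M},\fraks)=0$ in $\Z$, a contradiction. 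The same cobordism over $W$, now with values in $B\Diff(X,\fraks,\mathcal{O})$, rules out $\tilde\rho$ there.

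\textbf{The main obstacle.}
I expect the principal difficulty to be technical bookkeeping rather than a new idea: checking that the local-gluing definition of $\calM(\rho,s,\fraks)$ runs verbatim over a compact manifold with boundary, that the resulting $1$-manifold has no ends other than the closed $0$-manifold lying over $\del W=M$ (this uses the compactness of the \SW moduli spaces and $s|_{M}=s_{M}$), and, in \textbf{(ii)}, pinning down orientation conventions so that the induced boundary count reproduces $\SWinv(\Phi;\fraks)$ up to an overall sign. The fiberwise smoothness requirement on $E_{X}^{W}$ is a minor point dispatched by passing to a smooth handle or triangulation structure on $W$.
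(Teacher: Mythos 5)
Your proof is correct and follows essentially the same route as the paper's: assume an extension $\tilde{\rho}$ exists, pull back to get $E_{X}^{W}$ and $E_{\circPi}^{W}$ over $W$, take a generic section over $W$ restricting to a generic section over $M = \del W$, form the parameterized moduli space $\calM(\tilde{\rho}, \tilde{s}, \fraks)$ as a compact $1$-manifold with boundary $\calM(\rho, s, \fraks)$, and conclude the boundary count vanishes, contradicting $\SWinv(\Phi;\fraks)\neq 0$. Your extra justifications (the obstruction-theoretic argument that a section of $E_{\circPi}^{W}$ extending $s_{M}$ exists using $(b^{+}-2)$-connectivity of the fiber, and the explicit rerun of the local-gluing construction over $W$) are details the paper leaves implicit via the phrase ``by the same argument'', but they do not change the strategy.
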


\begin{proof}
We give the proof for the case (i); that for the case (ii) is similar to it.
As in \cref{subsection A generalized invariant}, take a model of $B\Diff(X, \fraks)$ to be a smooth infinite dimensional manifold.
Then we may assume that $\rho$ is smooth and it is sufficient to see that there is no smooth $\tilde{\rho}$ which makes the diagram in the statement commuting.
Assume that such a map $\tilde{\rho}$ does exist.
Then we obtain a smooth bundle $E^{W}_{X} \to W$ whose fiber is $X$ with structure group $\Diff(X, \fraks)$ and also obtain $E_{\circPi}^{W} \to W$ whose fiber is $\circPi(X)$ satisfying that $(E_{X}^{W})|_{\del W} = E_{X}$ and that $(E_{\circPi}^{W})|_{\del W} = E_{\circPi}$.
Take a generic section $s : \del W \to E_{\circPi}$ and its generic extension $\tilde{s} : W \to E_{\circPi}^{W}$.
By the same argument in \cref{subsection A generalized invariant}, we can consider the moduli space $\calM(\rho, s, \fraks)$ and $\calM(\tilde{\rho}, \tilde{s}, \fraks)$ parameterized on $M$ and on $W$ respectively.
The moduli space $\calM(\tilde{\rho}, \tilde{s}, \fraks)$ is a $1$-dimensional compact manifold with boundary $\calM(\rho, s, \fraks)$, and therefore we obtain $\#\calM(\rho, s, \fraks) = 0$ in $\Z/2$.
This contradicts the assumption $\SWinv(\Phi; \fraks) \neq 0$.
\end{proof}

Using \cref{obatruction at the level of Diff} and the non-vanishing theorem for our invariant (\cref{non-vanishing theorem}), in \cref{extension problem at the level of Diff} we will give an example of a family on $T^{n}$ of $4$-manifolds with structure group $\Diff(X, \fraks)$ which cannot be extended to an $(n+1)$-dimensional manifold $W$ bounded by $T^{n}$.

\section{Non-vanishing and applications}
\label{Section Non-vanishing and applications}

In this section we show a non-vanishing theorem for our invariant and give some applications, in particular the proof of \cref{application to PSC}.
The mechanism of the non-vanishing is quite similar to that of the cohomological \SW invariant introduced in \cite{Konno2} by the author.
There are two key tools to prove the non-vanishing:
the first one is the combination of wall-crossing and gluing technique due to Ruberman~\cite{MR1671187, MR1734421, MR1874146}, and the second is the description of higher-dimensional wall-crossing phenomena in terms of embedded surfaces given in \cite{Konno1} by the author.
\Cref{subsection: description of higher-dimensional wall-crossing phenomena in terms of embedded surfaces} is used to adjust the second tool in the situation of this paper.
In \cref{subsection: ex. of simple invariant} we give the concrete examples of diffeomorphisms for which our invariant does not vanish and an application of the non-vanishing to PSC metrics.
We also use the non-vanishing to give an obstruction to extensions of families of $4$-manifolds with structure group $\Diff(X, \fraks)$ in \cref{subsection: Application to the extension problem for families of 4manifolds}.

\subsection{Description of higher-dimensional wall-crossing}
\label{subsection: description of higher-dimensional wall-crossing phenomena in terms of embedded surfaces}

In~\cite{Konno1}, the author has given a description of higher-dimensional wall-crossing phenomena in terms of embedded surfaces.
In this subsection we recall and rewrite a part of it in a convenient form to prove the non-vanishing result for our invariant.
For natural numbers $n \geq 1$ and $m > n$, set $N = n\CP^2 \# m(-\CP^2) = \#_{i=1}^n \CP^2_i \# (\#_{j=1}^m (-\CP^2_j))$ and
let $H_i$ and $E_j$ be a generator of $H^2(\CP^2_i)$ and one of $H^2(-\CP^2_j)$ respectively.
Let $\frakt$ be the $\spc$ structure on $N$ satisfying that $c_1(\frakt) = \sum_{i=1}^n H_i + \sum_{j=1}^m E_j$.

Let $\Si_i^{+}, \Si_i^{-} \inc N$ $(i = 1, \ldots, n)$ be oriented closed connected surfaces embedded in $N$ such that 
\begin{itemize}
\item $[\Si_i^{\pm}]^2 = 0$,
\item for each $i$, $\Si_i^+$ and $\Si_i^-$ intersect transversally,
\item if $i \neq j$, then $\Si_i^{\epsilon_{i}} \cap \Si_j^{\epsilon_{j}} = \emptyset$ holds for any $\epsilon_i, \epsilon_j \in \{+, -\}$.
\end{itemize}
Take a small closed tubular neighborhood $U(\Si_{i}^{\pm})$ of each $\Si_{i}^{\pm}$.
Since the normal bundle of $\Si_i^{\pm}$ is trivial, $U(\Si_{i}^{\pm})$ is diffeomorphic to $D^2 \times \Si_i^{\pm}$.
Let $V(\Si_{i}^{\pm})$ be a closed tubular neighborhood of $\del U(\Si_{i}^{\pm}) \cong S^{1} \times \Si_{i}^{\pm}$, which is diffeomorphic to $[0,1] \times S^{1} \times \Si_{i}^{\pm}$.
We may assume that $V(\Si_{i}^{\epsilon_{i}})$ and $V(\Si_{j}^{\epsilon_{j}})$ are disjoint for any $i, j$ with $i \neq j$ and any $\epsilon_i, \epsilon_j \in \{+, -\}$ by taking $U(\Si_{i}^{\pm})$ and $V(\Si_{i}^{\pm})$ to be sufficiently small.
Given a positive number $R$, let us consider a map $\phi_{N} : [0,1]^{n} \to \Pi(N)$ satisfying

\begin{itemize}
\item $\phi_{N}(\del[0,1]^{n}) \subset \Met(N)$,
\item for any $j \in \{1, \ldots,n\}$ and for any metric $g \in \phi_{N}(F_{0, j}^{n-1})$, the Riemannian submanifold  $(V(\Si_{j}^{-}), g)$ is isometric to $[0, R] \times S^{1} \times \Si_{j}^{-}$, and
\item for any $j \in \{1, \ldots,n\}$ and for any metric $g \in \phi_{N}(F_{1, j}^{n-1})$, the Riemannian submanifold  $(V(\Si_{j}^{+}), g)$ is isometric to $[0, R] \times S^{1} \times \Si_{j}^{+}$.
\item
Assume that the family $\phi_N : [0, 1]^{n} \to \Pi(N)$ satisfies that $\phi_N(\del[0, 1]^{n}) \subset \circPi(N)$.
\end{itemize}
Here, in the  product $[0, R] \times S^{1} \times \Si_{j}^{\pm}$, we equip $\Si_{j}^{\pm}$ with a metric of constant scalar curvature and of unit area and $S^{1}$ with the metric of unit length, and $F^{n-1}_{i,j}$ $(i=0,1)$ are the faces of $[0,1]^{n}$ given as \eqref{eq: facet time-0} and \eqref{eq: facet time-1}.
Then one can define the ``intersection number'' $\phi_N \cdot \calW(N)$.
This intersection number can be interpreted as the mapping degree of the map $\phi_{N}|_{\del[0, 1]^{n}} : \del[0, 1]^{n} \to \circPi(N) \simeq S^{n-1}$. 
Here the given orientation of $H^+(N; \R)$ is used to determine the sign of the mapping degree, however we will not specify the signs it since we will work on $\Z/2$ in the proof of the non-vanishing theorem.

\begin{prop}[See Lemma 3.3~\cite{Konno1}]
\label{prop: description of higher-dim. wall-crossing}
Suppose that $H_j \cdot [\Si_i^{\pm}] = 0$ holds for any $i, j \in \{1, \ldots, n\}$ with $i \neq j$, and the two integers
\[
(c_1(\frakt) \cdot  [\Si_i^{+}]) \cdot (H_i \cdot [\Si_i^{+}])
\]
and
\[
(c_1(\frakt) \cdot  [\Si_i^{-}]) \cdot (H_i \cdot [\Si_i^{-}])
\]
are non-zero and have the different signs for each $i$.
If $R$ is sufficiently large, then the family $\phi_N : [0, 1]^{n} \to \Pi(N)$ satisfies that $\phi_{N}(\del[0,1]^{n}) \subset \circPi(N)$ and that $\phi_N \cdot \calW(N) = \pm1$.
\end{prop}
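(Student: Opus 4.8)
The plan is to reduce the claim to a local model computation near each pair of surfaces $\Si_i^\pm$, following the strategy of Lemma 3.3 in \cite{Konno1}. First I would record the elementary topological fact that, because the $V(\Si_j^{\pm})$ are mutually disjoint long cylinders and $\phi_N$ sends each facet $F_{i,j}^{n-1}$ to metrics with a long neck along $\Si_j^{\mp}$ (resp.\ $\Si_j^{\pm}$), the restriction $\phi_N|_{\partial[0,1]^n}\colon \partial[0,1]^n \cong S^{n-1} \to \circPi(N) \simeq S^{n-1}$ is well defined once we check that the relevant walls are avoided on each facet. That avoidance is exactly where the sign hypothesis enters: on a metric with a sufficiently long neck along a surface $\Si$ of square zero, the reducible \SW solutions are controlled by the pairing $c_1(\frakt)\cdot[\Si]$ together with the way $c_1(\frakt)$ distributes across the two sides of the neck, which is detected by $H_i\cdot[\Si]$. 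I would invoke the neck-stretching analysis of \cite{Konno1} to say that, when $(c_1(\frakt)\cdot[\Si_i^\pm])\,(H_i\cdot[\Si_i^\pm]) \neq 0$, the wall $\calW(N)$ is missed on the corresponding facet for $R$ large, so $\phi_N(\partial[0,1]^n) \subset \circPi(N)$ holds; this gives the first assertion.

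Next I would compute the intersection number $\phi_N\cdot\calW(N)$, equivalently the degree of $\phi_N|_{\partial[0,1]^n}$. The key point is that the boundary sphere $\partial[0,1]^n$ decomposes into the $2n$ facets, and along the $j$-th pair of opposite facets the map $\phi_N$ is built so that $F_{0,j}^{n-1}$ stretches the neck along $\Si_j^-$ and $F_{1,j}^{n-1}$ stretches the neck along $\Si_j^+$. Under the identification $H^+(N;\R) \cong \R^n$ coming from the $\CP^2_i$ summands, the value of $\phi_N$ on the $j$-th coordinate direction is governed, in the large-$R$ limit, essentially by the $j$-th coordinate, and the contribution of the $\Si_j^+$ facet versus the $\Si_j^-$ facet to the $j$-th coordinate has the \emph{opposite} sign precisely because the two products
\[
(c_1(\frakt)\cdot[\Si_j^+])(H_j\cdot[\Si_j^+]) \quad\text{and}\quad (c_1(\frakt)\cdot[\Si_j^-])(H_j\cdot[\Si_j^-])
\]
have different signs. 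The hypothesis $H_j\cdot[\Si_i^\pm]=0$ for $i\neq j$ guarantees that the $j$-th facet pair only affects the $j$-th coordinate, so the map $\phi_N|_{\partial[0,1]^n}$ is, up to homotopy, a product of $n$ degree-$\pm1$ self-maps of circles, and hence has degree $\pm1$. This yields $\phi_N\cdot\calW(N)=\pm1$.

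The main obstacle I expect is the second step: making precise the claim that the neck-stretched metrics realize the ``expected'' signs on each facet and that the various facet contributions assemble into a product map of the right degree. This requires the detailed linear-algebra picture from \cite{Konno1} relating the harmonic self-dual forms on a neck-stretched $N$ to the cohomology of the two pieces and to the classes $c_1(\frakt)$, $H_i$, $[\Si_i^\pm]$, together with the gluing estimates ensuring the wall position in $\circPi(N)$ is close to the model prediction for $R\gg 0$. Since \cref{prop: description of higher-dim. wall-crossing} is explicitly cited as a reformulation of Lemma 3.3 of \cite{Konno1}, I would organize the write-up so that the topological bookkeeping (disjointness of cylinders, facet structure of the cube, vanishing pairings for $i\neq j$) is done here in full, while the analytic heart---that a long neck along a square-zero surface $\Si$ with $c_1(\frakt)\cdot[\Si]\neq 0$ forces the wall to one definite side determined by the sign of $(c_1(\frakt)\cdot[\Si])(H_i\cdot[\Si])$---is quoted from \cite{Konno1}, checking only that the hypotheses there are met by the present $(N,\frakt,\Si_i^\pm,\phi_N)$.
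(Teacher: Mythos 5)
Your overall strategy — defer the analytic neck-stretching core to \cite{Konno1}, verify here that the topological hypotheses are met, and extract the degree by tracking sign constraints on each facet — is the same strategy the paper uses. The paper organizes the reduction as: (1) check that $\Si_1^{\pm},\ldots,\Si_n^{\pm},c_1(\frakt)$ satisfy Condition~1 of \cite{Konno1} (via the proof of Corollary~2.15 there); (2) invoke Lemma~3.3 of \cite{Konno1} for the degree statement; (3) record the key observation that the cylindrical metrics in the statement need not arise from a stretching construction with a prescribed initial metric (a remark about Lemma~3.2 in \cite{Konno1}), which is what licenses working with a fairly arbitrary $\phi_N$.

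There are, however, two points where your write-up is imprecise or leaves a gap. First, the sentence claiming that $\phi_N|_{\del[0,1]^n}$ is ``up to homotopy, a product of $n$ degree-$\pm1$ self-maps of circles'' does not parse: $\del[0,1]^n\cong S^{n-1}$ is not a torus, and the map $\phi_N$ is a generic map subject only to facet conditions, not a coordinatewise product. The correct version of the degree argument you are gesturing at is a Brouwer-degree / convex-homotopy argument: one shows that, in a suitable basis of $H^+(N;\R)\cong\R^n$ (Poincar\'e dual to the $H_i$), the $j$-th coordinate of the image of $\phi_N$ has a definite sign on $F_{0,j}^{n-1}$ and the opposite sign on $F_{1,j}^{n-1}$ for $R\gg 0$, from which one concludes by a linear homotopy to the standard map $t\mapsto 2t-\mathbf 1$ that $\phi_N|_{\del[0,1]^n}$ has degree $\pm1$. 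The hypotheses $H_j\cdot[\Si_i^\pm]=0$ for $i\neq j$ and the opposite-sign condition on $(c_1(\frakt)\cdot[\Si_j^\pm])(H_j\cdot[\Si_j^\pm])$ are exactly what makes these sign statements hold facet by facet; but that is a sign-pinning statement on each facet, not a product decomposition.

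Second, and more substantively, you do not address a discrepancy the paper explicitly handles: Lemma~3.3 of \cite{Konno1} as stated concludes only that the relevant degree is \emph{nonzero}, not that it is $\pm1$. The proposition here asserts $\phi_N\cdot\calW(N)=\pm1$. Upgrading from nonzero to $\pm1$ requires going back into the proof of Corollary~2.15 in \cite{Konno1} and checking that, under the present hypotheses, the degree in condition (ii) of Condition~1 is actually $\pm1$. If you merely cite Lemma~3.3 as a black box, you only get nonvanishing; you need to mention that this refinement follows from the specific form of the classes $[\Si_i^\pm]$, $H_i$, $c_1(\frakt)$, as in Corollary~2.15 of \cite{Konno1}. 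Without this, your argument proves $\phi_N\cdot\calW(N)\neq 0$, which is weaker than the stated claim.
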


\begin{proof}
This \lcnamecref{prop: description of higher-dim. wall-crossing} follows from the argument of Section~3 in \cite{Konno1}.
The key observation is that, in Lemma~3.2 in \cite{Konno1}, we do not need to assume that metrics in the statement of the lemma, containing the cylindrical part $[0, R_{i}] \times S^{1} \times \Si_{i}$, are obtained from the stretching construction starting from a given initial metric.
(See the proof of Lemma~3.2 in \cite{Konno1}.)

The detailed way to adapt the argument of Section~3 in \cite{Konno1} for the current situation is as follows.
The first step is to see that $\Si_{1}^{\pm}, \ldots, \Si_{n}^{\pm}, c_{1}(\frakt)$ satisfy Condition~1 in page~1138 of \cite{Konno1}.
In the proof of Corollary~2.15 in \cite{Konno1}, it is shown that $\Si_{1}^{\pm},\ldots, \Si_{b^{+}}^{\pm}, c'$ satisfy Condition~1 using the fact that $\alpha_{1}^{\pm},\ldots, \alpha_{b^{+}}^{\pm}, c'$ satisfy the equation (10), where $\alpha_{i}^{\pm}=[\Si_{i}^{\pm}]$.
The proof that $\Si_{1}^{\pm}, \ldots, \Si_{n}^{\pm}, c_{1}(\frakt)$ satisfy Condition~1 is completely same with this argument in the proof of Corollary~2.15.

The second step is to see that, if $\Si_{1}^{\pm}, \ldots, \Si_{n}^{\pm}, c_{1}(\frakt)$ satisfy Condition~1, then the wall-crossing happens, namely, the mapping degree of $\phi_N|_{\del [0,1]^{n}} : \del[0, 1]^{n} \to \circPi(N)$ is $\pm1$.
Lemma~3.3 in \cite{Konno1} provides the proof of this fact.

Here we give some comments to adapt Lemma~3.3.
In \cite{Konno1}, the map $\mathcal{F} : \mathcal{P} \to (V^{+})^{\ast}$ defined in page~1144  of \cite{Konno1} describes the wall-crossing.
Here $\mathcal{P}$ denotes the parameter space, corresponding to $[0,1]^{n}$ in the current situation.
The codomain of this map $\mathcal{F}$ is a $b^{+}$-dimensional vector space $(V^{+})^{*}$, not $\circPi(N)$.
However, the spaces $\circPi(N)$, $\Met(N) \cap \circPi(N)$, and $(V^{+})^{\ast} \setminus \{0\}$ are mutually homotopy equivalent.
(A homotopy equivalence map between $\Met(N) \cap \circPi(N)$ and $(V^{+})^{\ast} \setminus \{0\}$ is given by the restriction of the map (13) in \cite{Konno1}.)
Therefore we can rewrite the statement of Lemma~3.3 to be a statement on the mapping degree of a map into $\circPi(N)$ rather than $(V^{+})^{\ast} \setminus \{0\}$.
The rest difference between the appearance of the current situation and that of Lemma~3.3 is, although \cref{prop: description of higher-dim. wall-crossing} claims that $\phi_N \cdot \calW(N) = \pm1$, Lemma~3.3 in \cite{Konno1} claims that the mapping degree of $\mathcal{F}$ is just non-zero.
This difference comes from condition (ii) of Condition~1 says that the mapping degree of $F$ is just non-zero.
However, in the current situation, one can check that the conclusion of this condition (ii) can be replaced with the statement that the mapping degree of $F$ is $\pm1$.
This follows from the proof of Corollary~2.15 in \cite{Konno1} again.
\end{proof}

\subsection{Non-vanishing theorem and the proof of \cref{application to PSC}}
\label{subsection: ex. of simple invariant}

In this subsection we give the non-vanishing theorem for our invariant and prove \cref{application to PSC}.

\begin{theo}
\label{non-vanishing theorem}
Let $n > 0$ and set $N = n\CP^{2}\#2n(-\CP^{2})$.
Let $\frakt$ be the ${\it spin}^{c}$ structure on $N$ with $c_{1}(\frakt) =  \sum_{i=1}^n H_i + \sum_{j=1}^{2n} E_j$ and $(M, \fraks_{0})$ be a closed smooth ${\it spin}^{c}$ $4$-manifold with $b^{+}(M) \geq 2$ and $d(\fraks_{0}) = 0$.
Set $(X, \fraks) = (M \# N, \fraks_{0} \# \frakt)$.
Then, there exist $f_{1}, \ldots, f_{n} \in \Diff(X, \fraks)$ satisfying:
\begin{itemize}
\item $f_{1}, \ldots, f_{n}$ are commuting,
\item all of $f_{1}, \ldots, f_{n}$ reverse a given homology orientation of $X$, and
\item $\SWinv(f_{1}, \ldots, f_{n}; \fraks) = \SWinv(M, \fraks_{0})$ holds in $\Z/2$, where $\SWinv(M, \fraks_0)$ denotes the \SW invariant of $(M, \fraks_0)$.
\end{itemize}
\end{theo}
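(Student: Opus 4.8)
The plan is to realize the diffeomorphisms $f_i$ as the "obvious" diffeomorphisms of $N = n\CP^2 \# 2n(-\CP^2)$ that permute/reflect connected-sum factors, chosen so that the induced family of perturbations is exactly a gluing of $n$ copies of the one-parameter wall-crossing family on a $b^+=1$ piece with the constant family carrying the \SW invariant of $M$. First I would set up the geometric picture: for each $i \in \{1,\ldots,n\}$, isolate a "wall-crossing block" $N_i$ inside $N$ built from $\CP^2_i$ and two copies of $-\CP^2$ (say $-\CP^2_{2i-1}$ and $-\CP^2_{2i}$), so that $b^+(N_i) = 1$ and $N = M \# N_1 \# \cdots \# N_n$ after reabsorbing $M$. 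Inside $N$ I would produce the embedded surfaces $\Si_i^+, \Si_i^-$ of square zero, disjoint for distinct $i$, satisfying the hypotheses of \cref{prop: description of higher-dim. wall-crossing}: these are the proper transforms of lines/conics in the $\CP^2_i$ blown up at the points where $-\CP^2_{2i-1}, -\CP^2_{2i}$ are attached, arranged so that $(c_1(\frakt)\cdot[\Si_i^+])(H_i\cdot[\Si_i^+])$ and $(c_1(\frakt)\cdot[\Si_i^-])(H_i\cdot[\Si_i^-])$ have opposite signs. One checks that $H_j \cdot [\Si_i^\pm] = 0$ for $i\neq j$ because the surfaces live in disjoint blocks. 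Then I would take $f_i$ to be the diffeomorphism of $N$ (extended by the identity over $M$ and over $N_j$, $j\neq i$) that carries the metric data on the $\Si_i^-$-side of the $i$-th block to the $\Si_i^+$-side — concretely, the composite of a reflection of $\CP^2_i$ with a swap of the two $-\CP^2$ factors in the $i$-th block. Since these $f_i$ act on disjoint blocks they commute, each preserves $\frakt$ (one checks the action on $c_1$), and each reverses the homology orientation because it reverses orientation on $H^+$ of the $i$-th block while acting trivially on the rest; one then fixes a homology orientation of $X = M \# N$ compatibly.

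The core computation is the identification $\SWinv(f_1,\ldots,f_n;\fraks) = \SWinv(M,\fraks_0)$ in $\Z/2$. Here I would build the family $\vp_n(f_1,\ldots,f_n) : [0,1]^n \to \circPi(X)$ of \cref{subsection Definition of the invariant} explicitly from the Ruberman-style wall-crossing families: on the $i$-th coordinate direction we use a path of metrics/perturbations on $X$ that stretches the cylindrical neck $V(\Si_i^-) \cong [0,R]\times S^1 \times \Si_i^-$ at one end and $V(\Si_i^+)$ at the other, with $f_i$ identifying the two ends as in \eqref{fundamental diffeo between moduli}. Because the surfaces for different $i$ live in disjoint regions, these $n$ stretchings are mutually independent, so the family $\vp_n$ decomposes as an $n$-fold product of one-parameter wall-crossing families glued to a fixed generic perturbation on the $M$-part. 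By the gluing theorem for \SW invariants along long necks (Ruberman's gluing-with-wall-crossing technique, \cite{MR1671187, MR1734421, MR1874146}), for $R$ large the parameterized moduli space $\calM(\vp_n(f_1,\ldots,f_n),\frakt\#\fraks_0)$ on $[0,1]^n$ is identified, as an oriented (here $\Z/2$-counted) $0$-manifold, with the product of the $n$ wall-crossing numbers $\phi_{N_i}\cdot\calW(N_i)$ and the moduli space $\calM$ of $(M,\fraks_0)$ at the fixed generic perturbation. By \cref{prop: description of higher-dim. wall-crossing} each factor $\phi_{N_i}\cdot\calW(N_i) = \pm 1$, so the count reduces mod $2$ to $\#\calM(M,\fraks_0) = \SWinv(M,\fraks_0)$.

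The main obstacle I expect is the gluing/neck-stretching argument: one must show that, with all $n$ necks simultaneously long, every solution of the family \SW equations on $X = M \# N$ splits (up to exponentially small error) into a solution on $M$ at the chosen perturbation together with, in each block $N_i$, the reducible configuration responsible for the wall-crossing — and that no "extra" solutions appear on the necks or from bubbling. This is where the hypotheses $b^+(M)\geq 2$, $d(\fraks_0)=0$, $d(\frakt)=-n$ and the a priori estimates on spinors are all used: $b^+(M)\geq 2$ guarantees that for a generic fixed perturbation on $M$ the moduli space $\calM(M,\fraks_0)$ is a compact $0$-manifold with no reducibles and is cobordism-invariant, while $d(\frakt) = -n$ forces each block to contribute only via its reducible wall. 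The surface-theoretic input of \cite{Konno1} (as repackaged in \cref{prop: description of higher-dim. wall-crossing}) is precisely what pins down that the boundary behavior of the $i$-th family is the nontrivial generator of $\pi_{n-1}(\circPi) \cong \pi_0$ of the relevant $S^0$, hence contributes a nonzero wall-crossing number; the remaining work is the standard but technical energy/compactness bookkeeping to promote this boundary statement to the interior count, for which I would cite and adapt Ruberman's gluing estimates rather than redo them. Finally I would record that, since at least one $f_i$ reverses the homology orientation, the invariant is only $\Z/2$-valued, consistent with the statement.
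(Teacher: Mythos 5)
Your overall plan — wall‑crossing on $n$ disjoint blocks $N_i = \CP^2_i \#(-\CP^2_{i,1})\#(-\CP^2_{i,2})$, glued to a fixed generic perturbation on the $M$-part, with \cref{prop: description of higher-dim. wall-crossing} supplying the $\pm1$ wall-crossing count and Ruberman's gluing theorem reducing everything to $\SWinv(M,\fraks_0)$ — matches the paper's strategy. However, your concrete choice of diffeomorphism $f_i$ does not work, and this is a genuine gap rather than a detail.

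You take $f_i$ to be the composite of ``a reflection of $\CP^2_i$'' (which on cohomology sends $H_i \mapsto -H_i$) with a swap of the two $(-\CP^2)$ summands in the $i$-th block (which exchanges the two $E$-classes). On $c_1(\fraks)$ this gives
\[
f_i^\ast\bigl(H_i + E_{i,1} + E_{i,2}\bigr) = -H_i + E_{i,2} + E_{i,1} = c_1(\fraks)\big|_{N_i} - 2H_i \neq c_1(\fraks)\big|_{N_i}.
\]
So $f_i^\ast c_1(\fraks) \neq c_1(\fraks)$, and since preserving a $\spcGL$ structure on a simply connected piece is equivalent to fixing $c_1$ on the nose (not up to sign), your $f_i$ does \emph{not} lie in $\Diff(X,\fraks)$. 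The invariant $\SWinv(f_1,\ldots,f_n;\fraks)$ is therefore not even defined for your choice. The entire point of Ruberman's device, which the paper uses, is to first reflect in the $(-1)$-sphere $S$ representing $c_1(\frakt_0) = H + E_1 + E_2$ via $\rho_S$ (this sends $[S] \mapsto -[S]$) and then compose with the three reflections $\rho_H \# \rho_{E_1} \# \rho_{E_2}$ (which together send $\alpha \mapsto -\alpha$), so that the composite $f'_0 = (\rho_H \# \rho_{E_1} \# \rho_{E_2}) \circ \rho_S$ fixes $c_1(\frakt_0)$ while still reversing the orientation of $H^+$. Your ``reflection plus swap'' achieves the $H^+$-reversal but destroys the $\spc$ structure, and there is no small adjustment (e.g.\ choosing a different single reflection of $\CP^2_i$) that fixes this — the sphere you must reflect in lives in $N_0$, not in $\CP^2_i$ alone.

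A secondary imprecision: you describe the surfaces $\Si_i^\pm$ as proper transforms of lines/conics chosen so that the sign condition in \cref{prop: description of higher-dim. wall-crossing} holds, but you never pin down their homology classes, nor explain why they are square-zero and have a partner given by the diffeomorphism. The paper's choice is to take $\Si_0^+$ representing $H - E_1$ and then \emph{define} $\Si_0^- := f'_0(\Si_0^+)$, so that the needed compatibility between the diffeomorphism, the cylindrical-neck metric, and the wall-crossing surfaces is automatic. Without this, your claim that the one-parameter family on the $i$-th block ``stretches $V(\Si_i^-)$ at one end and $V(\Si_i^+)$ at the other'' has no mechanism: you need $(f'_0)^\ast g_0$ to be cylindrical near $\Si_0^+$ precisely because $g_0$ is cylindrical near $\Si_0^- = f'_0(\Si_0^+)$. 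Once you replace your $f_i$ with the paper's $f'_0$-copies and let $\Si_i^- = f'_i(\Si_i^+)$ with $[\Si_i^+] = H_i - E_{i,1}$, the rest of your outline (disjointness of blocks, product decomposition of $\phi_N$, application of Ruberman's wall-crossing-plus-gluing, mod-$2$ count) is in line with the paper's proof.
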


\begin{proof}
Note that $d(\fraks) = -n$ and $b^{+}(X) \geq n+2$.
We write $N$ as
\[
N = \#_{i=1}^{n}\left(\CP_{i}^{2} \# (-\CP^{2}_{i, 1}) \# (-\CP^{2}_{i, 2})\right)
\]
and set $N_{i} = \CP_{i}^{2} \# (-\CP^{2}_{i, 1}) \# (-\CP^{2}_{i, 2})$.
The ordered basis $\{ H_{1}, \ldots, H_{n}\}$ of $H^{+}(N)$ gives the homology orientation of $N$, and by fixing a homology orientation of $M$, we obtain one of $X$.

Let  $\frakt_{0}$ be the $\spc$ structure on $N_{0} = \CP^{2}\#(-\CP^{2}_{1})\#(-\CP^{2}_{2})$ with $c_{1}(\frakt_{0}) = H+E_{1}+E_{2}$, where $H,$ $E_{1}$, and $ E_{2}$ are generators of $H^{2}(\CP^{2})$, $H^{2}(-\CP^{2}_{1})$, and $H^{2}(-\CP^{2}_{2})$ respectively.
Let us identify the second cohomology group with the second homology group via Poincar\'{e} duality.
Take a sphere $S$ which represents $c_{1}(\frakt_{0})$.
For this $(-1)$-curve, let $\rho_{S} : N_{0} \to N_{0}$ be a diffeomorphism which gives rise to the reflection with respect to $S$ on cohomology: the induced map is $\rho_{S}^{\ast}(\alpha) = \alpha + 2(\alpha \cdot [S])[S]$ for $\alpha \in H^{2}(N_{0})$.
(The use of this reflection is due to Ruberman~\cite{MR1671187, MR1734421, MR1874146}.)
We may take $\rho_{S}$ to be the identity on a disk in $N_{0}$.
Let us also take embedded spheres which represent $H$ and $E_{j}$ respectively, and similarly define diffeomorphisms $\rho_{H}$ and $\rho_{E_{j}}$ on $\CP^{2}$ and $-\CP^{2}_{j}$ respectively.
(We note that the sign in the formula for $\rho_{H}^{\ast}$ is changed since $H$ is a $(+1)$-curve:
$\rho_{H}^{\ast}(\alpha) = \alpha - 2(\alpha \cdot H)H$.)
The maps $\rho_{H}^{\ast}$ and $\rho_{E_{j}}^{\ast}$ induce just the multiplication by $-1$ on $H^{2}(\CP^{2})$ and $H^{2}(-\CP^{2}_{j})$ respectively.
We define a diffeomorphism $f'_{0}$ on $N_{0}$ by $f'_{0} := (\rho_{H} \# \rho_{E_{1}} \# \rho_{E_{2}}) \circ \rho_{S}$.
The induced map $(f'_{0})^{\ast}$ on cohomology is expressed as the matrix
\[
\begin{pmatrix}
-3 & 2 & 2\\
-2 & 1 & 2\\
-2 & 2 &1
\end{pmatrix},
\]
where we take $\{H, E_{1}, E_{2}\}$ as an ordered basis of $H^{2}(N_{0})$.
From this description, it is easy to see that $f_{0}'$ preserves $\frakt_{0}$ (at the level of isomorphism classes) and the orientation of $N_{0}$, and reverses an orientation of $H^{+}(N_{0})$.
We take an embedded sphere $\Si_{0}^{+}$ in $N_{0}$ which represents $H-E_{1}$, and
set $\Si_{0}^{-} := f_{0}'(\Si_{0}^{+})$.
By perturbing $f_{0}'$ using isotopy, we can arrange that $\Si_{0}^{+}$ and $\Si_{0}^{-}$ intersect transversely.
As in \cref{subsection: description of higher-dimensional wall-crossing phenomena in terms of embedded surfaces}, let $U(\Si_{0}^{-})$ be a closed tubular neighborhood of $\Si_{0}^{-}$ in $N_{0}$ and $V(\Si_{0}^{-})$ be a closed tubular neighborhood of $\del U(\Si_{0}^{-})$.
We also take a metric $g_{0} \in \Met(N_{0})$ such that the Riemannian submanifold $(V(\Si_{0}^{-}), g_{0})$ is isometric to $[0, R] \times S^{1} \times \Si_{0}^{-}$ for a sufficiently large $R$.
Note that $((f_{0}')^{-1}(V(\Si_{0}^{-})), (f'_{0})^{\ast}g_{0})$ is isometric to $[0, R] \times S^{1} \times \Si_{0}^{+}$.
Let $\mu_{0} \in \Omega^{+}_{g_{0}}(N_{0})$ be a small generic self-dual $2$-form whose support is contained in a small open set in $N_{0}$ and $\phi_{0} : [0,1] \to \Pi(N_{0})$ be a generic path from $(g_{0}, \mu_{0})$ to $(f'_{0})^{\ast}(g_{0}, \mu_{0})$.

For each $i \in \{1, \ldots, n\}$, we define a diffeomorphism $f'_{i}$ on $N_{i}$ as the copy of $f'_{0}$.
Let $\Si_{i}^{\epsilon}$ be the surface embedded in $N_{i}$ given as the copy of $\Si_{0}^{\epsilon}$.
We also regard each $\Si_{i}^{\epsilon}$ as an embedded surface in $N$ and in $X$.
It can be seen easily that the collection $\{\Si_i^{\epsilon}\}_{1 \leq i \leq n, \epsilon \in \{+,-\}}$ satisfies the assumption of \cref{prop: description of higher-dim. wall-crossing}.
Let $\phi_{i} : [0,1] \to \Pi(N_{i})$ be the copy of $\phi_{0}$, and we define a map $\phi_{N} : [0,1]^{n} \to \Pi(N)$ by
\[
\phi_{N}(t_{1}, \ldots, t_{n}) := \phi_{1}(t_{1}) \# \cdots \# \phi_{n}(t_{n}),
\]
where the connected sum is considered on the complement of the supports of the copies of $\mu_{0}$.
The mapping degree of $\phi_{N}|_{\del[0, 1]^{n}} : \del[0, 1]^{n} \to \circPi(N) \simeq S^{n-1}$ is invariant under a small perturbation, and therefore we obtain $\phi_N \cdot \calW(N) = \pm1$ from \cref{prop: description of higher-dim. wall-crossing}.

Let $f_{i}$ be the diffeomorphism on $X$ defined as the connected sum of $f'_{i}$ and the identity map on $M\#(\#_{\substack{1 \leq j \leq n,\\ j \neq i }} N_{j})$.
Each $f_{i}$ preserves $\fraks$ and reverses the homology orientation of $X$.
Since $f_{1}, \ldots, f_{n}$ are obviously commuting, we can consider $\SWinv(f_1, \ldots, f_n; \fraks) \in \Z/2$.
Let $(g_{M}, \mu_{M}) \in \Pi(M)$ be a generic point and $B_{M}^{4} \subset M$ and $B_{N}^{4} \subset N$ be small balls.
We may also assume that $\mu_{M}$ is supported on the complement of $B_{M}^{4}$, and $B_{N}^{4}$ is contained in the complement of the supports of the copies of $\mu_{0}$.
Then we can define $\phi : [0,1]^{n} \to \circPi(X)$ by $\phi(x) := (g_{M}, \mu_{M})\#\phi_{N}(x)$, where $\#$ is the connected sum consisting of $M \setminus B^{4}_{M}$, $N \setminus B^{4}_{N}$, and a cylinder with the standard product metric and of sufficiently large length.
Since we have $d(\fraks) = -n$, the moduli space $\calM(\phi, \fraks)$ parameterized by $\phi$ on $[0, 1]^{n}$ is a $0$-dimensional compact manifold.
For this moduli space,
\begin{align}
\# \calM(\phi, \fraks) = \pm(\phi_{N} \cdot \calW(N)) \cdot \SWinv(M, \fraks_0)
= \pm\SWinv(M, \fraks_0)
\label{calculation of moduli and equality to SW inv}
\end{align}
holds in $\Z$ by Ruberman's combination of wall-crossing and gluing arguments~\cite{MR1671187, MR1734421, MR1874146}.
Note that the sign may change in the last equality.

\begin{rem}
Ruberman's combination of wall-crossing and gluing arguments is summarized as Proposition 4.1 in \cite{Konno2}.
After writing this paper, Baraglia and the author~\cite{BK} generalized Proposition 4.1 in \cite{Konno2} to more general families of 4-manifolds.
\end{rem}

For distinct indices $i_1, \ldots, i_k \in \{1, \ldots, n\}$ with $i_1< \cdots < i_k$, we define a smooth generic map
\[
\vp_k(f_{i_1}, \ldots, f_{i_k}) : [0,1]^k \to \circPi(X)
\]
as the composition $\phi \circ \iota_{i_{1}, \ldots, i_{k}} : [0, 1]^{k} \to \circPi(X)$, where
\[
\iota_{i_{1}, \ldots, i_{k}} : [0, 1]^{k} \cong [0, 1]^{\{i_{1}, \ldots, i_{k}\}} \to [0, 1]^{n}
\]
is the inclusion corresponding to the coordinates $t_{i_{1}}, \ldots, t_{i_{k}}$.
Of course, the map $\vp_n(f_{1}, \ldots, f_{n})$ coincides with $\phi$.
Note that we have
\[
f_{i_{1}}^{\ast} \cdots f_{i_{k}}^{\ast}(\vp_{0}(0)) = \phi(v_{i_{1}, \ldots, i_{k}}),
\]
where $v_{i_{1}, \ldots, i_{k}} \in [0, 1]^{n}$ is the vector whose $i$-th component is $0$ if $i \in \{1, \ldots, n\} \setminus \{i_{1}, \ldots, i_{k}\}$ and is $1$ if $i \in \{i_{1}, \ldots, i_{k}\}$.
This $\vp_{\bullet}$ can be therefore used to calculate $\SWinv(f_{1}, \ldots, f_{n}; \fraks)$.
Namely, we have
\[
\SWinv(f_{1}, \ldots, f_{n}; \fraks)
= \SWinv(f_1, \ldots, f_n; \fraks; \vp_{\bullet})
= \# \calM(\vp_n(f_{1}, \ldots, f_{n}), \fraks)
= \# \calM(\phi, \fraks).
\]
From this equality and \eqref{calculation of moduli and equality to SW inv}, we obtain
\[
\SWinv(f_{1}, \ldots, f_{n}; \fraks) = \SWinv(M, \fraks_{0})
\]
in $\Z/2$.
This proves the \lcnamecref{non-vanishing theorem}.
\end{proof}

\begin{cor}
\label{cor application to PSC}
Let $(M, \fraks_{0})$ and $(X, \fraks)$ be the ${\it spin}^{c}$ $4$-manifolds given in the statement of \cref{non-vanishing theorem}.
Suppose that $\SWinv(M, \fraks_{0}) = 1$ in $\Z/2$ and $\PSC(X) \neq \emptyset$.
Then, 
\[
\pi_{i}(\PSC(X)) \neq 0
\]
holds for at least one $i \in \{0, \ldots, n-1\}$.
\end{cor}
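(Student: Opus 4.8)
The plan is to deduce this corollary formally by combining the non-vanishing statement \cref{non-vanishing theorem} with the vanishing criterion \cref{vanishing of homotopy groups and invariant}; the argument is a short contrapositive, so I expect no real difficulty beyond checking that the numerical hypotheses transfer to the connected sum.

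First I would apply \cref{non-vanishing theorem} to $(X, \fraks) = (M \# N, \fraks_{0} \# \frakt)$ to produce commuting diffeomorphisms $f_{1}, \ldots, f_{n} \in \Diff(X, \fraks)$, each reversing a fixed homology orientation of $X$, with
\[
\SWinv(f_{1}, \ldots, f_{n}; \fraks) = \SWinv(M, \fraks_{0}) \quad \text{in}\ \Z/2.
\]
Since $\SWinv(M, \fraks_{0}) = 1$ in $\Z/2$ by hypothesis, this gives $\SWinv(f_{1}, \ldots, f_{n}; \fraks) \neq 0$. Here the invariant $\SWinv(f_{1}, \ldots, f_{n}; \fraks)$ is defined because $d(\fraks) = -n$ and $b^{+}(X) = b^{+}(M) + n \geq n+2$, as recorded at the start of the proof of \cref{non-vanishing theorem}.

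Next, suppose for contradiction that $\pi_{i}(\PSC(X)) = 0$ for every $i \in \{0, \ldots, n-1\}$. Since $\PSC(X) \neq \emptyset$ by hypothesis, \cref{vanishing of homotopy groups and invariant} applies to the commuting tuple $f_{1}, \ldots, f_{n}$ and gives $\SWinv(f_{1}, \ldots, f_{n}; \fraks) = 0$, contradicting the previous step. Therefore $\pi_{i}(\PSC(X)) \neq 0$ for at least one $i \in \{0, \ldots, n-1\}$, which is the assertion.

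I do not expect any genuine obstacle: the corollary is a formal consequence of the two cited results, all of whose content has already been established — in particular the non-vanishing, which rests on the higher-dimensional wall-crossing of \cref{prop: description of higher-dim. wall-crossing} together with Ruberman's combination of wall-crossing and gluing. The only bookkeeping point, handled above, is to observe that the hypotheses $d(\fraks) = -n$ and $b^{+}(X) \geq n+2$ needed by both \cref{non-vanishing theorem} and \cref{vanishing of homotopy groups and invariant} hold for $(X, \fraks)$, which is immediate from $b^{+}(X) = b^{+}(M) + n$ and the connected-sum formula for the formal dimension.
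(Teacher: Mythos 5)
Your proof is correct and takes essentially the same route as the paper: apply \cref{non-vanishing theorem} to obtain commuting $f_1,\ldots,f_n$ with $\SWinv(f_1,\ldots,f_n;\fraks)=\SWinv(M,\fraks_0)\neq 0$ in $\Z/2$, then conclude by the contrapositive of \cref{vanishing of homotopy groups and invariant}. The extra bookkeeping you did (checking $d(\fraks)=-n$ and $b^+(X)\geq n+2$) is consistent with the remarks at the start of the paper's proof of \cref{non-vanishing theorem}.
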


\begin{proof}
Let $f_{1}, \ldots, f_{n}$ be the diffeomorphisms whose existence is assured by \cref{non-vanishing theorem}.
Since we have $\SWinv(f_1, \ldots, f_n; \fraks) = \SWinv(M, \fraks_0) \neq 0$ in $\Z/2$, the assertion of the \lcnamecref{cor application to PSC} follows from \cref{vanishing of homotopy groups and invariant}.
\end{proof}

\begin{proof}[Proof of \cref{application to PSC}]
As in the proof of Corollary 5.2 in Ruberman~\cite{MR1874146}, let $M$ be the blowup at $l-2n+2$ points of the elliptic surface with $b^{+}=2k-1$, and $\fraks_{0}$ be the $\spc$ structure on $M$ which arises from the complex structure.
This $\spc$ $4$-manifold $(M, \fraks_{0})$ satisfies the assumption in \cref{non-vanishing theorem}.
Note that $M \# N$ is diffeomorphic to $X$ given in the statement of the \lcnamecref{application to PSC} (see \cite{MR537731,MR0491730}).
We have therefore proven the \lcnamecref{application to PSC} from \cref{cor application to PSC}.
\end{proof}

\begin{rem}
\label{rem: comparison with Ruberman and Xu}
We here compare \cref{application to PSC} with Ruberman's result~\cite{MR1874146} and Xu's~\cite{MR2706507} in detail.
Ruberman proved $\pi_{0}(\PSC(X)) \neq 0$ for $X = 2k\CP^{2} \# l(-\CP^{2})$ with $k \geq 2$ and a sufficiently large $l$, given as Corollary~5.2 in \cite{MR1874146}.
For $4$-manifolds with odd $b^{+}$, 
the result due to Xu gives the non-triviality of $\pi_{0}(\PSC(X))$ for $X = (4k+7)\CP^{2} \# l(-\CP^{2})$ with $k \geq 0$ and a sufficiently large $l$.
(See comments after Theorem~33 in \cite{MR2706507}.
Xu has considered the connected sum of two copies of an algebraic surface $X'$ with $b^{+}(X') \equiv 3$ mod $4$ there.
From this the non-triviality of $\pi_{0}(\PSC(X))$ is deduced for $X = (8k+7)\CP^{2} \# l(-\CP^{2})$ with $k \geq 0$.
If we consider the connected sum of $X'$ and $K3$, one can show that the non-triviality for $(4k+7)\CP^{2} \# l(-\CP^{2})$ by Theorem~33 in \cite{MR2706507} and Bauer's product formula \cite{MR2025299} for the Bauer--Furuta invariant.)

Let us return to our \cref{application to PSC}.
In the case that $n$ is odd, Ruberman's result is stronger than that of \cref{application to PSC} in general.
We note that, in the case that $n=1$, the proof of \cref{application to PSC} can be regarded as an alternative proof of the result due to Ruberman on the disconnectivity of $\PSC(X)$ for $X = 2k\CP^{2} \# l(-\CP^{2})$ using the invariant defined in \cite{MR1671187} rather than the invariant $\SWinv_{tot}$ defined in \cite{MR1874146}.
(In \cite{MR1874146}, Ruberman has showed not only the disconnectivity but also $\pi_{0}(\PSC(X))$ is infinite using $\SWinv_{tot}$.)
We next consider the case that $n$ is even; set $n=2m$ $(m \geq 1)$.
If $k+m-4$ is even, Xu's result is stronger than that of \cref{application to PSC}.
The new part of the result of \cref{application to PSC} is the case that $k+m-4$ is odd;
this case cannot be deduced from Ruberman's result and Xu's.
\Cref{application to PSC} therefore provides new constraints on $\PSC(X)$ for infinitely many $4$-manifolds $X$'s having distinct $b^{+}$.
\end{rem}

\begin{rem}
At this stage the author does not know which $i \in \{0, \ldots, n-1\}$ satisfies $\pi_{i}(\PSC(X)) \neq 0$ in \cref{application to PSC} for $n>1$.
We therefore present the following question:

\begin{problem}
\label{question}
In the setting of \cref{application to PSC} with $n>1$, detect $i \in \{0, \ldots, n-1\}$ satisfying that $\pi_{i}(\PSC(X)) \neq 0$.
\end{problem}

As explained in the introduction, \cref{application to PSC} seems the best possible constraint on $\PSC(X)$ obtained from the direct generalization of Ruberman's argument in~\cite{MR1671187, MR1734421, MR1874146}: the higher-dimensional wall-crossing.
One therefore needs another technique to attack \cref{question}.
Although it is, of course, a difficult problem to show some vanishing/non-vanishing result for homotopy groups of $\PSC(X)$ in general,
the author expects that some combination of the invariant defined in this paper and ideas given in Auckly--Kim--Melvin--Ruberman~\cite{MR3355110} provides a way to approach it.
\end{rem}

\subsection{Non-extendable families of $4$-manifolds}
\label{subsection: Application to the extension problem for families of 4manifolds}

In \cref{subsection: Invariant as an obstruction}, we have mentioned that our invariant can be interpreted as an obstruction to extensions of families of $4$-manifolds.
As a corollary of \cref{non-vanishing theorem}, we can exhibit a family of $4$-manifolds which is obstructed by our invariant:
we give an example of families on $T^{n}$ of $4$-manifolds with structure group $\Diff(X, \fraks)$ which cannot be extended to certain larger base spaces.

We fix $n \geq 1$.
Let $(M, \fraks_{0})$, $(X, \fraks)$ and $f_{1}, \ldots, f_{n} \in \Diff(X, \fraks)$ be the $\spc$ $4$-manifolds and the diffeomorphisms given in the statement of \cref{non-vanishing theorem}, where we
take $(M, \fraks_{0})$ to be $\SWinv(M, \fraks_{0}) = 1$ in $\Z/2$.
For example, we can take $(2k + n - 1)\CP^{2} \# l(-\CP^{2})$ as $X$ for $k \geq 2$ and $l \geq 10k + 2n -1$ as in the proof of \cref{application to PSC}, and on the other hand we can also take a non-simply connected $4$-manifold as $X$.
Let $\Phi : \left<f_{1}, \ldots, f_{n} \right> \inc \Diff(X, \fraks)$ be the inclusion and $\rho : T^{n} \to B\Diff(X, \fraks)$ be the classifying map of the bundle $E_{X} \to T^{n}$ given  by the Borel construction with respect to the actions of $f_{1}, \ldots, f_{n}$ on $X$.

\begin{cor}
\label{extension problem at the level of Diff}
The ${\it spin}^{c}$ $4$-manifold $(X, \fraks)$ and the map $\rho : T^{n} \to B\Diff(X, \fraks)$ given above satisfy the following property:
For any $(n+1)$-dimensional compact smooth manifold $W$ with $\del W = T^{n}$, there exists no continuous map $\tilde{\rho} : W \to B\Diff(X, \fraks)$ such that the following diagram commutes:
\begin{align}
\xymatrix{
    W \ar[dr]^{\tilde{\rho}} &  \\
    T^{n} \ar[r]_-{\rho} \ar@{}[u]|-*{\rotatebox{90}{$\subset$}} &  B\Diff(X, \fraks).
    }
\label{diagram classifying space}
\end{align}
\end{cor}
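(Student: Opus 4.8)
The plan is to obtain this as a direct consequence of the obstruction result \cref{obatruction at the level of Diff}(i), once we feed in the nonvanishing supplied by \cref{non-vanishing theorem}. First I would record that the standing hypotheses of \cref{obatruction at the level of Diff} are met: since $d(\fraks_{0}) = 0$ and $b^{+}(M) \geq 2$, we have $d(\fraks) = -n$ and $b^{+}(X) \geq n+2$ (as already observed at the beginning of the proof of \cref{non-vanishing theorem}). Moreover, because $(M,\fraks_{0})$ has been chosen with $\SWinv(M,\fraks_{0}) = 1$ in $\Z/2$, \cref{non-vanishing theorem} produces commuting $f_{1},\dots,f_{n} \in \Diff(X,\fraks)$, each reversing the fixed homology orientation, with
\[
\SWinv(f_{1},\dots,f_{n};\fraks) = \SWinv(M,\fraks_{0}) = 1 \quad \text{in } \Z/2 .
\]

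Next I would pass from the invariant of the tuple to the generalized invariant. Here $\pi_{1}(T^{n}) \cong \Z^{n}$, the homomorphism $\Phi$ is the inclusion $\langle f_{1},\dots,f_{n}\rangle \inc \Diff(X,\fraks)$ precomposed with the surjection $\Z^{n} \to \langle f_{1},\dots,f_{n}\rangle$, and $\rho : T^{n} \to B\Diff(X,\fraks)$ is by definition the classifying map of the Borel-construction bundle $X \to \widetilde{T^{n}} \times_{\pi_{1}(T^{n})} X \to T^{n}$. This is exactly the setting of \cref{interpretation of SW for tuples}, which identifies $\SWinv(\Phi;\fraks)$ with $\SWinv(f_{1},\dots,f_{n};\fraks)$; hence $\SWinv(\Phi;\fraks) = 1 \neq 0$ in $\Z/2$. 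Applying \cref{obatruction at the level of Diff}(i) with $M = T^{n}$ then yields precisely the assertion: for every $(n+1)$-dimensional compact manifold $W$ with $\del W = T^{n}$, there is no continuous $\tilde{\rho} : W \to B\Diff(X,\fraks)$ rendering \eqref{diagram classifying space} commutative.

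I do not expect a genuine obstacle here: the \lcnamecref{extension problem at the level of Diff} is an assembly of \cref{non-vanishing theorem}, \cref{interpretation of SW for tuples}, and \cref{obatruction at the level of Diff}, with all the analytic content already contained in those statements. The only point meriting care is that the $f_{i}$ reverse the homology orientation, so one must invoke part (i) of \cref{obatruction at the level of Diff} and work with the $\Z/2$-valued invariant throughout — which is exactly what \cref{non-vanishing theorem} provides. It is also worth noting that $W$ is left completely unconstrained: no $\spc$-structure or orientation data on $W$ is required, since the obstruction is entirely internal to $B\Diff(X,\fraks)$, so the conclusion holds for an arbitrary compact smooth filling of $T^{n}$.
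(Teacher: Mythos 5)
Your proposal is correct and follows essentially the same route as the paper: combine \cref{non-vanishing theorem} to get $\SWinv(f_1,\ldots,f_n;\fraks) = 1$ in $\Z/2$, identify this with $\SWinv(\Phi;\fraks)$ via \cref{interpretation of SW for tuples}, and then apply part (i) of \cref{obatruction at the level of Diff}. The only addition you make beyond the paper's three-line proof is the (correct) observation that one must use the $\Z/2$-valued version because the $f_i$ reverse the homology orientation.
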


\begin{proof}
By \cref{interpretation of SW for tuples} and  \cref{non-vanishing theorem}, we have
\[
\SWinv(\Phi; \fraks)
= \SWinv(f_{1}, \ldots, f_{n} ;\fraks) = \SWinv(M, \fraks_{0}) = 1
\]
in $\Z/2$.
The assertion therefore follows from \cref{obatruction at the level of Diff}.
\end{proof}

\begin{rem}
\label{rem extension and ex for surface bundle}
At this stage, the author does not know whether the map $\rho : T^{n} \to B\Diff(X, \fraks)$ in \cref{extension problem at the level of Diff} cannot be extended to a map from $W$ to $B\Diff^{+}(X)$ for any $W$.
This is a non-trivial question, though the family corresponding to $\rho$ given as a mapping torus obtained from concrete commuting diffeomorphisms $f_{1}, \ldots, f_{n}$, and the action of $f_{i}$ on $H_{\ast}(X ;\Z)$ is non-trivial for any $i$:
in general, for the mapping torus obtained from given commuting diffeomorphisms on a given manifold, the non-triviality of the action of the diffeomorphisms on the homology group of the fiber is not sufficient to deduce that the mapping torus cannot be extended as a $\Diff^{+}(X)$-bundle to a given new base space $W$ bounded by the torus, which is the original base space.

One can check this even for surface bundles, rather than $4$-manifold bundles.
To be precise, for any $g,g' \geq 1$, there exists an orientation preserving diffeomorphism $f$ on an oriented closed surface $\Si_{g}$ of genus $g$ such that $f$ acts on $H^{1}(\Si_{g})$ non-trivially, but the mapping torus $\Si_{g} \to E_{\Si_{g}} \to S^{1}$ with respect to $f$ can be extended to a bundle over $W=\Si_{g',1}$, an oriented compact surface of genus $g'$ with one boundary component $S^{1}$, which is regarded as the base space of $E_{\Si_{g}}$.

The construction of such $f$ is as follows.
Firstly, note that we can take 
\[
f_{1}, f_{1}', \ldots, f_{g'}, f_{g'}' \in \Diff^{+}(\Si_{g})
\]
to be $(\prod_{i=1}^{g'}[f_{i},f_{i}'])^{*} : H^{1}(\Si_{g}) \to H^{1}(\Si_{g})$ is non-trivial.
To get such $f_{1}, f_{1}', \ldots, f_{g'}, f_{g'}'$, take
$A_{1}, A_{2} \in Sp(2,\Z)$ with $[A_{1}, A_{2}] \neq 1$, for example
\begin{align*}
A_{1}=
\begin{pmatrix}
2 & 1\\
2 & 1
\end{pmatrix},
A_{2}=
\begin{pmatrix}
1 & 1\\
1 & 2
\end{pmatrix}.
\end{align*}
Recall the fact that the natural map $p : \Diff^{+}(\Si_{g}) \to Sp(2g,\Z)$ is surjective.
Therefore we can find $f_{1}, f_{2}$ satisfying
$p(f_{i})=A_{i}$, where precisely this $A_{i}$ means the image of $A_{i} \in Sp(2,\Z)$ into $Sp(2g,\Z)$ by the natural inclusion $Sp(2,\Z) \subset Sp(2g,\Z)$.
Then, by defining $f_{2}, f_{2}', \ldots, f_{g'}, f_{g'}'=1$, we have that $(\prod_{i=1}^{g'}[f_{i},f_{i}'])^{*} : H^{1}(\Si_{g}) \to H^{1}(\Si_{g})$ is non-trivial.
Set $f = \prod_{i=1}^{g'}[f_{i},f_{i}']$.

Secondly, we shall show that the mapping torus $\Si_{g} \to E_{\Si_{g}} \to S^{1}$ with respect to $f$ can be extended to a bundle over $W=\Si_{g',1}$.
Recall the standard representation of $\pi_{1}(\Si_{g',1})$:
\[
\pi_{1}(\Si_{g',1}) = \left<A_{1}, B_{1}, \ldots, A_{g'},B_{g'},C \mid \prod_{i=1}^{g'}[A_{i},B_{i}]=C \right>.
\]
Using $f_{1}, f_{1}', \ldots, f_{g'}, f_{g'}' \in \Diff^{+}(\Si_{g})$ taken above, 
we get a homomorphism $\tilde{\rho} : \pi_{1}(\Si_{g',1}) \to \Diff^{+}(\Si_{g})$
by
$\tilde{\rho}(A_{i})=f_{i}, \tilde{\rho}(B_{i})=f_{i}'$, and $\tilde{\rho}(C)= f$.
Thus we have a bundle $\Si_{g} \to \tilde{E}_{\Si_{g}} \to \Si_{g',1}$ by the Borel construction via $\tilde{\rho}$.
The restriction $\Si_{g} \to \tilde{E}_{\Si_{g}}|_{\del \Si_{g',1}} \to \del \Si_{g',1} = S^{1}$ coincides with the mapping torus with respect to $f$.
Therefore the map $f$ satisfies the desired conditions.
\end{rem}

Using \cref{extension problem at the level of Diff}, we can study a purely group theoretic property on $\Diff(X, \fraks)$.

\begin{cor}
\label{extension problem at the level of Diff in terms of groups}
The ${\it spin}^{c}$ $4$-manifold $(X, \fraks)$ and the diffeomorphism $f_{1}, \ldots, f_{n} \in \Diff(X, \fraks)$ given above satisfy the following property:
Let $W$ be an $(n+1)$-dimensional compact smooth manifold with $\del W \cong T^{n}$ and set $G=\pi_{1}(W)$.
Then, there exists no group homomorphism $\phi : G \to \Diff(X, \fraks)$ such that the following diagram commutes:
\begin{align}
\xymatrix{
    G \ar[drr]^{\phi} &  \\
    \Z^{n} \ar[u]^{i_{\ast}} \ar[r] &  \left< f_{1}, \ldots, f_{n} \right> \ar@{}[r]|-*{\subset} &  \Diff(X, \fraks),
    }
\label{diagram groups}
\end{align}
where $i_{\ast}$ is the map induced by the inclusion $i : T^{n} \cong \del W \inc W$.
\end{cor}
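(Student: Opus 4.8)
The plan is to reduce the statement to \cref{extension problem at the level of Diff} by turning a hypothetical group homomorphism $\phi$ into a map of classifying spaces that extends $\rho$. So I would suppose, for contradiction, that a homomorphism $\phi : G \to \Diff(X, \fraks)$ making \eqref{diagram groups} commute exists; without loss of generality $W$ is connected. Since $\pi_1(W) = G$, there is a classifying map $c_W : W \to BG$ for the universal covering of $W$, inducing the identity on $\pi_1$, and I would set $\tilde{\rho} := B\phi \circ c_W : W \to B\Diff(X, \fraks)$, where $B\phi$ is induced by $\phi$ (with $G$ regarded as a discrete group). Concretely, $\tilde{\rho}$ is the classifying map of the flat $X$-bundle $\tilde{W} \times_G X \to W$ built from the $G$-action on $X$ via $\phi$.

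The next step is to compute the restriction of $\tilde{\rho}$ to $\del W \cong T^n$. The composite $T^n \hookrightarrow W \xrightarrow{c_W} BG$ induces on $\pi_1$ the homomorphism $i_\ast : \Z^n \to G$ from the statement; since $T^n$ is aspherical, a map out of $T^n$ into $BG$ is determined up to homotopy by its effect on $\pi_1$, so $c_W|_{\del W} \simeq Bi_\ast$. Hence
\[
\tilde{\rho}|_{\del W} \simeq B\phi \circ Bi_\ast = B(\phi \circ i_\ast).
\]
By the commutativity of \eqref{diagram groups}, the homomorphism $\phi \circ i_\ast : \Z^n \to \Diff(X, \fraks)$ is $(k_1, \ldots, k_n) \mapsto f_1^{k_1} \cdots f_n^{k_n}$, i.e.\ it is the map $\Phi$ of \cref{interpretation of SW for tuples}; its classifying map $B\Phi : T^n = B\Z^n \to B\Diff(X, \fraks)$ is, by construction, the classifying map of the Borel construction $E_X \to T^n$ associated to the commuting diffeomorphisms $f_1, \ldots, f_n$, that is, the map $\rho$ of \cref{extension problem at the level of Diff}. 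Therefore $\tilde{\rho}|_{\del W} \simeq \rho$, and after replacing $\tilde{\rho}$ by a homotopic map using the homotopy extension property of the pair $(W, \del W)$ we may assume $\tilde{\rho}|_{\del W} = \rho$ on the nose, so that the triangle in \cref{extension problem at the level of Diff} commutes. This contradicts \cref{extension problem at the level of Diff}, and hence no such $\phi$ can exist.

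I expect the only point that needs genuine care is the homotopy-theoretic identification $c_W|_{\del W} \simeq Bi_\ast$: it rests on $T^n$ being a $K(\Z^n,1)$, which is the single place where the hypothesis that the boundary is a torus (rather than an arbitrary closed $n$-manifold) enters; everything else is formal naturality of the Borel construction together with the already-established obstruction \cref{extension problem at the level of Diff}, which itself packages \cref{non-vanishing theorem} through the non-vanishing $\SWinv(M, \fraks_0) = 1$ in $\Z/2$ for the chosen $(M, \fraks_0)$.
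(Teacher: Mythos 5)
Your argument is correct and takes essentially the same route as the paper: you assume a homomorphism $\phi$ exists, build from it (up to homotopy and the homotopy extension property) a map $\tilde\rho : W \to B\Diff(X,\fraks)$ extending $\rho$, and derive a contradiction from \cref{extension problem at the level of Diff}. The only cosmetic difference is that the paper realizes the extension by an explicit $\Diff(X,\fraks)$-equivariant bundle morphism $F : \R^n \times_{\Z^n} X \to \tilde W \times_{\pi_1(W)} X$ covering $T^n \hookrightarrow W$, whereas you work directly with classifying maps and asphericity of $T^n$; both encode the same naturality of the Borel construction.
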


\begin{proof}
Assume that there exists $\phi$ which makes the diagram \eqref{diagram groups} commutative.
Let $\tilde{W}$ denotes the universal covering of $W$, and regard $\R^{n}$ as the universal covering of $T^{n}$.
By taking models of $\tilde{W}$ and $\R^{n}$ as path spaces, we can define a map $\R^{n} \to \tilde{W}$ which covers the inclusion $T^{n} \inc W$.
The commutativity of the diagram \eqref{diagram groups} implies that the map $\R^{n} \to \tilde{W}$ induces a well-defined map $\R^{n} \times_{\Z^{n}} X \to \tilde{W} \times_{\pi_{1}(W)} X$, denoted by $F : \R^{n} \times_{\Z^{n}} X \to \tilde{W} \times_{\pi_{1}(W)} X$.
This map $F$ corresponds to a $\Diff(X, \fraks)$-equivariant map $\R^{n} \times_{\Z^{n}} \Diff(X, \fraks) \to \tilde{W} \times_{\pi_{1}(W)} \Diff(X, \fraks)$.
Namely, the map $F$ is a morphism between $\Diff(X, \fraks)$-bundles.
We note that, for each point $p \in T^{n}$ the restriction of $F$ on the fibers on $p$ is invertible, in particular $F$ is injective.
The map $\tilde{\rho}$ defined as the classifying map of $\tilde{W} \times_{\pi_{1}(W)} X$ therefore makes the diagram \eqref{diagram classifying space} commutative.
This contradicts \cref{extension problem at the level of Diff}.
\end{proof}

\begin{rem}
We note that, the assertion of \cref{extension problem at the level of Diff in terms of groups} in the case that $n=1$ follows from an elementary argument below, which is based on the presentation of the fundamental group of a compact surface.
(We note that this argument is obviously valid even if we replace $\Diff(X, \fraks)$ with $\Diff^{+}(X)$.)
If we take $W = D^{2}$, the statement is trivial.
Suppose $W \neq D^{2}$ and there exists $\phi$ which makes the diagram \eqref{diagram groups} commutative.
Then $f$ can be written as $\prod_{j=1}^{g}[h_{j}, h'_{j}]$ or $\prod_{i=1}^{g} h_{j}^{2}$ for some $g \geq 1$ and $h_{j}, h_{j}' \in \Diff(X, \fraks)$.
Since $f$ reverses the homology orientation, this is a contradiction.
\end{rem}

\begin{bibdiv}
 \begin{biblist}

\bib{MR3355110}{article}{
   author={Auckly, Dave},
   author={Kim, Hee Jung},
   author={Melvin, Paul},
   author={Ruberman, Daniel},
   title={Stable isotopy in four dimensions},
   journal={J. Lond. Math. Soc. (2)},
   volume={91},
   date={2015},
   number={2},
   pages={439--463},
   issn={0024-6107},
   review={\MR{3355110}},
   doi={10.1112/jlms/jdu075},
}
 
 \bib{MR2025299}{article}{
   author={Bauer, Stefan},
   title={A stable cohomotopy refinement of Seiberg-Witten invariants. II},
   journal={Invent. Math.},
   volume={155},
   date={2004},
   number={1},
   pages={21--40},
   issn={0020-9910},
   review={\MR{2025299}},
   doi={10.1007/s00222-003-0289-4},
}

\bib{BK}{article}{
	author = {Baraglia, D.},
	author = {Konno, H.},
	title = {A gluing formula for families Seiberg-Witten invariants},
	eprint = {arXiv:1812.11691},
}

\bib{MR710056}{article}{
   author={Donaldson, S. K.},
   title={An application of gauge theory to four-dimensional topology},
   journal={J. Differential Geom.},
   volume={18},
   date={1983},
   number={2},
   pages={279--315},
   issn={0022-040X},
   review={\MR{710056}},
}

 \bib{MR1079726}{book}{
   author={Donaldson, S. K.},
   author={Kronheimer, P. B.},
   title={The geometry of four-manifolds},
   series={Oxford Mathematical Monographs},
   note={Oxford Science Publications},
   publisher={The Clarendon Press, Oxford University Press, New York},
   date={1990},
   pages={x+440},
   isbn={0-19-853553-8},
   review={\MR{1079726}},
}

\bib{Konno1}{article}{
	author = {H. Konno},
	title = {Bounds on genus and configurations of embedded surfaces in 4-manifolds},
   journal={J. Topol.},
   volume={9},
   date={2016},
   number={4},
   pages={1130-1152},
   doi={10.1112/jtopol/jtw021},
}

\bib{Konno2}{article}{
	author = {H. Konno},
	title = {A cohomological Seiberg-Witten invariant emerging from the adjunction inequality},
	eprint = {arXiv:1704.05859},
}

\bib{Konno3}{article}{
	author = {Konno, H.},
	title = {Characteristic classes via 4-dimensional gauge theory},
	eprint = {arxiv:1803.09833},
}

\bib{MR1868921}{article}{
   author={Li, Tian-Jun},
   author={Liu, Ai-Ko},
   title={Family Seiberg-Witten invariants and wall crossing formulas},
   journal={Comm. Anal. Geom.},
   volume={9},
   date={2001},
   number={4},
   pages={777--823},
   issn={1019-8385},
   review={\MR{1868921 (2002k:57074)}},
}

\bib{MR537731}{article}{
   author={Mandelbaum, Richard},
   title={Decomposing analytic surfaces},
   conference={
      title={Geometric topology (Proc. Georgia Topology Conf., Athens, Ga.,
      1977)},
   },
   book={
      publisher={Academic Press, New York-London},
   },
   date={1979},
   pages={147--217},
   review={\MR{537731}},
}

\bib{MR0491730}{book}{
   author={Moishezon, Boris},
   title={Complex surfaces and connected sums of complex projective planes},
   series={Lecture Notes in Mathematics, Vol. 603},
   note={With an appendix by R. Livne},
   publisher={Springer-Verlag, Berlin-New York},
   date={1977},
   pages={i+234},
   isbn={3-540-08355-3},
   review={\MR{0491730}},
}

\bib{MR2015245}{article}{
   author={Nakamura, Nobuhiro},
   title={The Seiberg-Witten equations for families and diffeomorphisms of
   4-manifolds},
   journal={Asian J. Math.},
   volume={7},
   date={2003},
   number={1},
   pages={133--138},
   issn={1093-6106},
   review={\MR{2015245}},
   doi={10.4310/AJM.2003.v7.n1.a8},
}

\bib{MR2176601}{article}{
   author={Nakamura, Nobuhiro},
   title={Correction to: ``The Seiberg-Witten equations for families and
   diffeomorphisms of 4-manifolds'' [Asian J. Math. {\bf 7} (2003), no. 1,
   133--138; MR2015245]},
   journal={Asian J. Math.},
   volume={9},
   date={2005},
   number={2},
   pages={185},
   issn={1093-6106},
   review={\MR{2176601}},
   doi={10.4310/AJM.2005.v9.n2.a3},
}

\bib{MR2644908}{article}{
   author={Nakamura, Nobuhiro},
   title={Smoothability of $\Bbb Z\times\Bbb Z$-actions on 4-manifolds},
   journal={Proc. Amer. Math. Soc.},
   volume={138},
   date={2010},
   number={8},
   pages={2973--2978},
   issn={0002-9939},
   review={\MR{2644908}},
   doi={10.1090/S0002-9939-10-10413-4},
}

\bib{MR1671187}{article}{
   author={Ruberman, Daniel},
   title={An obstruction to smooth isotopy in dimension $4$},
   journal={Math. Res. Lett.},
   volume={5},
   date={1998},
   number={6},
   pages={743--758},
   issn={1073-2780},
   review={\MR{1671187 (2000c:57061)}},
   doi={10.4310/MRL.1998.v5.n6.a5},
}

\bib{MR1734421}{article}{
   author={Ruberman, Daniel},
   title={A polynomial invariant of diffeomorphisms of 4-manifolds},
   conference={
      title={Proceedings of the Kirbyfest},
      address={Berkeley, CA},
      date={1998},
   },
   book={
      series={Geom. Topol. Monogr.},
      volume={2},
      publisher={Geom. Topol. Publ., Coventry},
   },
   date={1999},
   pages={473--488 (electronic)},
   review={\MR{1734421 (2001b:57073)}},
   doi={10.2140/gtm.1999.2.473},
}

\bib{MR1874146}{article}{
   author={Ruberman, Daniel},
   title={Positive scalar curvature, diffeomorphisms and the Seiberg-Witten
   invariants},
   journal={Geom. Topol.},
   volume={5},
   date={2001},
   pages={895--924 (electronic)},
   issn={1465-3060},
   review={\MR{1874146 (2002k:57076)}},
   doi={10.2140/gt.2001.5.895},
}

\bib{MR2706507}{book}{
   author={Xu, Ming},
   title={The Bauer-Furuta invariant and a cohomotopy refined Ruberman
   invariant},
   note={Thesis (Ph.D.)--State University of New York at Stony Brook},
   publisher={ProQuest LLC, Ann Arbor, MI},
   date={2004},
   pages={69},
   isbn={978-0496-06957-6},
   review={\MR{2706507}},
}
 \end{biblist}
\end{bibdiv} 

\end{document}